\numberwithin{equation}{section}
\theoremstyle{plain}
 \newtheorem{thm}{Theorem}[section]
 \newtheorem{lem}[thm]{Lemma}
 \newtheorem{prop}[thm]{Proposition}
\theoremstyle{definition}
\newtheorem{defn}[thm]{Definition}
 \newtheorem{ex}[thm]{Example}
 \newtheorem{rem}[thm]{Remark}
\newcommand{\al}{\alpha}
\newcommand{\bt}{\beta}
\newcommand{\gm}{\gamma}
\newcommand{\Gm}{\Gamma}
\newcommand{\ld}{\lambda}
\newcommand{\ta}{\tau}
\newcommand{\Ph}{\Phi}
\newcommand{\Ps}{\Psi}
\newcommand{\rh}{\rho}
\newcommand{\Up}{\Upsilon}
\newcommand{\mcal}{\mathcal}
\newcommand{\mrm}{\mathrm}
\newcommand{\mfr}{\mathfrak}
\newcommand{\wh}{\widehat}
\newcommand{\wt}{\widetilde}
\newcommand{\R}{\mathbb{R}}
\newcommand{\rd}{\mathbb R ^d}
\newcommand{\law}{\mathcal L}
\newcommand{\sek}{\int_0^{\infty}}
\newcommand{\xsm}{X_s^{(\mu)}}
\newcommand{\n}{\noindent}
\newcommand{\les}{\leqslant}
\newcommand{\ges}{\geqslant}
\newcommand{\tr}{\mathrm{tr}\,}
\begin{document}
\setlength{\baselineskip}{18pt}
\setlength{\parindent}{1.8pc}

\title{The limits of nested subclasses of several classes of
infinitely divisible distributions are identical
with the closure of the class of stable distributions}
\maketitle

\n
{\bf Makoto Maejima}$^1$\\  
{\small Department of Mathematics,
Keio University, 3-14-1, Hiyoshi, Kohoku-ku, Yokohama 223-8522, Japan.\\
E-mail: maejima@math.keio.ac.jp; Fax: +81-45-566-1462}\\
{\bf Ken-iti Sato}$^2$\\
{\small Hachiman-yama 1101-5-103, Tenpaku-ku, Nagoya 468-0074, Japan.\\
E-mail: ken-iti.sato@nifty.ne.jp}

\overfullrule=0pt

\vskip 10mm
\n
({\it Running head} : Nested subclasses of
infinitely divisible distributions)

\vskip 10mm
\n
{\bf Abstract}. 
It is shown that the limits of the nested subclasses of five classes of 
infinitely divisible distributions on $\mathbb{R}^d$,
which are the Jurek class, the Goldie--Steutel--Bondesson class, the class 
of selfdecomposable 
distributions, the Thorin class and the class of generalized type $G$ distributions,
are identical with the closure of the class of stable distributions.
More general results are also given.

\vskip 5mm
\n
{\it Mathematics subject classification (2000)} : 60E07
\vskip 10mm
\section{Introduction}

Subdivision of the class of infinitely divisible distributions 
on $\R^d$ has been an important subject since Urbanik's papers ([16], [17]).
 [7], [6], and [3] are some of many papers in this field. Among others, 
there are the Jurek class, 
the Goldie--Steutel--Bondesson class, the class of selfdecomposable 
distributions, the Thorin class, the class of type $G$ 
distributions and their respective nested subclasses. Jurek ([6]) 
showed that two limits of the nested subclasses starting from the Jurek class and 
the class 
of selfdecomposable distributions are identical.
It is also known (see [17] and [10]) that
the latter is the closure of the class of stable distributions, where 
the closure is taken under weak convergence and convolution.

In this paper, we treat five classes of infinitely divisible distributions on 
$\rd$, all of which are characterized in
terms of the radial components in the polar decomposition of the L\'evy measures of
infinitely divisible distributions, and the purpose of this paper is to show that 
the limits of the nested subclasses of these 
five classes are identical and equal to the closure of the class of stable distributions.
In the course of the proof, we also give a more general theorem.

\vskip 10mm

\section{Preliminaries and the main result}

Throughout the paper, $\law (X)$ denotes the law of an $\rd$-valued random variable 
$X$ and 
$\wh \mu (z), z\in\rd$, denotes the characteristic function of a probability 
distribution $\mu$ on $\rd$.
Also $I(\rd)$ denotes the class of all infinitely divisible distributions on $\rd$,
$I_{{\rm sym}}(\rd) =\{ \mu\in I(\rd) :\mu\,\,\text{is symmetric on}\,\, \rd\} $,
$I_{\log}(\rd)= \{ \mu\in I(\rd) :\int_{\rd}\log ^+|x|\mu(dx)<\infty\} $ and
$I_{\log^{m}}(\rd)= \{ \mu\in I(\rd) :\int_{\rd}(\log ^+|x|)^{m}\mu(dx)<\infty\}$.
Further $S_{\al}(\rd)$ denotes the class of $\al$-stable distributions on $\rd$
for $0<\al\les2$ and $S(\rd)$ denotes the class of stable distributions on $\rd$.
Let $C_{\mu}(z), z\in \rd$, be the cumulant function of $\mu\in I(\rd)$.  
That is, $C_{\mu}(z)$ is the unique continuous function
with $C_{\mu}(0)=0$ such that $\wh \mu(z) = \exp \left ( C_{\mu}(z)\right ), z\in \rd$.

We use the L\'evy-Khintchine triplet $(A, \nu, \gm)$ of $\mu\in I(\rd)$ in the sense that
\begin{equation}
\begin{split}
C_{\mu}(z) = -{2^{-1}} &\langle z,Az \rangle  + {\rm i} 
\langle \gm,z \rangle\\
&+ \int_{\rd}\left(e^{{\rm i} \langle z,x \rangle }-1-
{{\rm i} \langle z,x \rangle}(1+|x|^2)^{-1}\right)\nu(dx),\,z\in\rd,
\end{split}
\end{equation}
where $A$ is a symmetric nonnegative-definite $d \times d$ matrix, 
$\gamma\in\rd$ and $\nu$ is a measure  (called the L\'evy measure) on 
$\rd$ satisfying
\begin{equation*}
\nu(\{0\}) = 0 \,\,\text{and}\,\,\int_{\rd} (|x|^{2} \wedge 1) \nu(dx) < \infty.
\end{equation*}
The following is a basic result on the L\'evy measure of $\mu\in I(\rd)$.

\begin{prop}\label{p2.1}
{\rm (Polar decomposition of L\'evy measures.)
([9], [3])}
Let $\nu$ be the L\'evy measure of the characteristic function of some 
$\mu\in I(\R^d)$ with $0<\nu(\R^d)\les\infty$. Then
there exist a measure $\ld$ on $S=\{\xi\in\rd : |\xi |=1\}$
with $0<\ld(S)\les\infty$ and
a family $\{\nu_{\xi}\colon \xi\in S\}$ of measures on $(0,\infty)$ such that
$\nu_{\xi}(B)$ is measurable in $\xi$ for each $B\in\mathcal B((0,\infty))$,
$0<\nu_{\xi}((0,\infty))\les\infty$ for each $\xi\in S$, and
$$
\nu(B)=\int_S \ld(d\xi)\int_0^{\infty} 1_B(r\xi)\nu_{\xi}(dr),\quad
 B\in \mathcal B (\mathbb R^d \setminus \{ 0\}).
$$
Here $\ld$ and $\{\nu_{\xi}\}$ are uniquely determined by $\nu$
up to multiplication of measurable functions $c(\xi)$ and ${c(\xi)^{-1}}$,
respectively, 
with $0<c(\xi)<\infty$, and  $\nu_{\xi}$ is called the radial component of $\nu$.
We call $(\ld (d\xi), \nu_{\xi}(dr))$ a polar decomposition of the L\'evy measure 
$\nu \ne 0$.
\end{prop}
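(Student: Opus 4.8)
The plan is to obtain the decomposition as a disintegration of $\nu$ along the polar-coordinate map, reducing first to a finite measure so that the disintegration theorem applies, and then to read off uniqueness from the uniqueness of disintegrations.

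First I would set up polar coordinates. Let $T\colon(0,\infty)\times S\to\R^d\setminus\{0\}$ be the homeomorphism $T(r,\xi)=r\xi$, with inverse $x\mapsto(|x|,x/|x|)$; since $(0,\infty)$ and $S$ are both Polish, $T$ is a Borel isomorphism. Transport $\nu$ to $\wt{\nu}$ on $(0,\infty)\times S$ by $\wt{\nu}(E)=\nu(T(E))$. Because $\int_{\R^d}(|x|^2\wedge1)\,\nu(dx)<\infty$, the measure $\nu$ is finite on each $\{|x|\ges\varepsilon\}$, hence $\sigma$-finite, and the integrability condition reads $\int(r^2\wedge1)\,\wt{\nu}(dr\,d\xi)<\infty$.

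For existence, note that the angular marginal of $\wt{\nu}$ may well be infinite, so I cannot disintegrate $\wt{\nu}$ directly; this is the one real obstacle, and I circumvent it by weighting. Put $g(r)=r^2\wedge1>0$ and define the finite measure $m(dr\,d\xi)=g(r)\,\wt{\nu}(dr\,d\xi)$. Applying the disintegration theorem on the Polish product $(0,\infty)\times S$, with $\ld(E):=m((0,\infty)\times E)$ its finite $S$-marginal, yields a measurable family of probability measures $\{m_\xi\}$ on $(0,\infty)$ with $m(dr\,d\xi)=\ld(d\xi)\,m_\xi(dr)$. Setting $\nu_\xi(dr)=g(r)^{-1}m_\xi(dr)$ gives $\wt{\nu}(dr\,d\xi)=\ld(d\xi)\,\nu_\xi(dr)$, and transporting back through $T$ produces exactly the asserted formula. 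Here $\ld(S)=m((0,\infty)\times S)=\int_{\R^d}(|x|^2\wedge1)\,\nu(dx)\in(0,\infty)$ is positive because $\nu\ne0$; since $g^{-1}\ges1$ we get $\nu_\xi((0,\infty))\ges m_\xi((0,\infty))=1>0$; and $\xi\mapsto\nu_\xi(B)=\int_B g^{-1}\,dm_\xi$ is measurable because $\xi\mapsto m_\xi(B)$ is. On the $\ld$-null set where the kernel is not determined I simply fix $\nu_\xi$ to be any measure of positive mass, so the stated pointwise conditions hold for every $\xi\in S$.

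For uniqueness, suppose $(\ld,\{\nu_\xi\})$ and $(\ld',\{\nu'_\xi\})$ both give the decomposition. Multiplying the radial components by $g$, the measures $p_\xi=g\,\nu_\xi$ and $p'_\xi=g\,\nu'_\xi$ are finite for a.e.\ $\xi$ (their $\ld$- and $\ld'$-integrals both equal the total mass of $m$); write $a(\xi)=p_\xi((0,\infty))$ and $a'(\xi)=p'_\xi((0,\infty))$, both in $(0,\infty)$, and normalize to probability kernels $\hat p_\xi=p_\xi/a(\xi)$, $\hat p'_\xi=p'_\xi/a'(\xi)$. Then $m(dr\,d\xi)=(a(\xi)\ld(d\xi))\,\hat p_\xi(dr)=(a'(\xi)\ld'(d\xi))\,\hat p'_\xi(dr)$ are two disintegrations of the same finite measure into a marginal and probability kernels. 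By uniqueness of disintegration the marginals coincide, $a\,\ld=a'\,\ld'$, and the kernels agree almost everywhere, $\hat p_\xi=\hat p'_\xi$; as $a,a'>0$ this holds $\ld$-a.e. Consequently $\nu'_\xi=(a'(\xi)/a(\xi))\,\nu_\xi$ and $\ld'(d\xi)=(a(\xi)/a'(\xi))\,\ld(d\xi)$, which is the claimed relation with measurable $c(\xi)=a(\xi)/a'(\xi)\in(0,\infty)$ (again adjusting arbitrarily on the null set). Apart from the weighting trick just described, the remaining work---measurability, positivity, and transport through $T$---is routine bookkeeping.
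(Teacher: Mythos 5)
Your proof is correct, and since the paper gives no proof of Proposition \ref{p2.1} (it simply cites [9] and [3]), the right comparison is with those sources: your argument --- transport $\nu$ to $(0,\infty)\times S$, weight by $r^2\wedge 1$ to obtain a finite measure, disintegrate over the $S$-marginal, unweight, and deduce uniqueness from uniqueness of disintegrations after renormalizing to probability kernels --- is essentially the standard proof given there (e.g.\ Lemma 2.1 of [3]). The points that need care (the possibly infinite angular marginal, measurability of $\xi\mapsto\nu_\xi(B)$, positivity and finiteness of the normalizing factors $a(\xi)$, and the adjustment on $\ld$-null sets) are all handled properly.
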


Five classes in $I(\rd)$ we are going to discuss in this paper are the following.
As mentioned before, they are defined in terms of the radial component $\nu_{\xi}$ of Levy measures.

\n
(1) Class $U(\rd)$ (the Jurek class) :
\begin{equation*}
\nu_{\xi} (dr) = \ell _{\xi}(r)dr,
\end{equation*}
where $\ell_{\xi}(r)$ is measurable in $\xi\in S$ and
decreasing in $r\in (0,\infty)$.
(Here and in what follows, we use the word \lq\lq decreasing" in the non-strict sense.)

\vskip 3mm
\n
(2) Class $B(\rd) $ (the Goldie--Steutel--Bondesson class) :
\begin{equation*}
\nu_{\xi}(dr)=\ell_{\xi}(r)dr,
\end{equation*}
where  $\ell_{\xi}(r)$ is measurable in $\xi\in S$ and completely monotone on $(0,\infty)$.

\vskip 3mm
\n
(3) Class $L(\rd)$ (the class of selfdecomposable distributions) :
\begin{equation*}
\nu_{\xi}(dr)={k_{\xi}(r)}{r^{-1}}dr,
\end{equation*}
where $k_{\xi}(r)$ is measurable in $\xi\in S$ and decreasing on $(0,\infty)$.

\vskip 3mm
\n
(4) Class $T(\rd)$ (the Thorin class) :
\begin{equation*}
\nu_{\xi}(dr)={k_{\xi}(r)}{r^{-1}}dr,
\end{equation*}
where $k_{\xi}(r)$ is measurable in $\xi\in S$ and completely monotone on $(0,\infty)$.

\vskip 3mm
\n
(5) Class $G(\rd) $ (the class of generalized type $G$ distributions) :
\begin{equation*}
\nu_{\xi}(dr)=g_{\xi}(r^2)dr,
\end{equation*}
where
$g_{\xi}(x)$ is measurable in $\xi\in S$ and completely monotone on $(0,\infty)$.
(If $\mu\in G(\rd)$ is symmetric, it is of type $G$ distribution.)

From the definitions, it is trivial that
$$B(\rd)\cup L(\rd) \cup G(\rd)\subset U(\rd)\quad\text{and}\quad T(\rd)\subset L(\rd).$$
Also the same argument as in [2] shows that
$$T(\rd)\subset B(\rd)\subset G(\rd).$$
For that, first note that\\
(1) the product of two completely monotone functions on $(0,\infty)$ is also completely monotone on $(0,\infty)$, 
\\
(2) $f(x) = x^{-\al}, \al >0,$ is  completely monotone  on $(0,\infty)$,\\
and\\
(3) if $\phi$ is a  completely monotone function on $(0,\infty)$ and $\psi$ is a nonnegative differentiable 
function on 
$(0,\infty)$ whose derivative is completely monotone, then the composition $\phi(\psi)$ is completely monotone 
on $(0,\infty)$, (see Corollary 2 in p.\,441 of [4]).\\
It follows from (1) and (2) that $T(\rd)\subset B(\rd)$.
If we put $g_{\xi}(x) = l_{\xi}(x^{1/2}),$ then by (3), we have that $B(\rd)\subset G(\rd).$
These inclusions are all strict, as shown below.
For example, if we take a decreasing but not completely monotone function $l_{\xi}$, then
we can see that $B(\rd)\subsetneqq U(\rd)$.
Also, if we take a completely monotone function $g_{\xi}$ which cannot be expressed as 
$g_{\xi}(x) = l_{\xi}(x^{1/2})$ with some completely monotone function  $l_{\xi}$, then we see
that $B(\rd)\subsetneqq G(\rd)$.
Similar arguments work for all other cases.

Note that there are no relationships of inclusion between the classes $B(\rd)$ and $L(\rd)$,
and the classes $G(\rd)$ and $L(\rd)$.  Actually it is easy to see that
$B(\rd) \setminus L(\rd) \ne \emptyset, L(\rd) \setminus B(\rd) \ne \emptyset, G(\rd) \setminus L(\rd) \ne \emptyset,$
and $L(\rd) \setminus G(\rd) \ne \emptyset$.

These five classes are also characterized by mappings from infinitely divisible 
distributions to infinitely divisible 
distributions defined by the distributions of stochastic integrals with respect to
L\'evy processes.
In what follows, $\{X_s^{(\mu)}\}$ stands  for a L\'evy process on $\rd$ with 
$\law (X_1^{(\mu)}) =\mu$.

\begin{defn}\label{d2.2}

\n
(1) ($\mathcal U$-mapping) {\rm ([5])}
For $\mu\in I(\rd)$,
$$
\mathcal U (\mu) = \law\left (\int_0^1sd\xsm \right ).
$$
(2)
($\Up$-mapping) 
{\rm ([3])}
For $\mu\in I(\rd)$,
$$
\Up (\mu) = \law\left (\int_0^1\log(s^{-1})d\xsm \right ).
$$
(3)
($\Phi$-mapping) 
For $\mu\in I_{\log}(\rd)$,
$$
\Phi (\mu) = \law\left (\int_0^{\infty}e^{-s}d\xsm \right ).
$$
(4)
($\Psi$-mapping) ([3])
Let $e(t) = \int_t^{\infty}{e^{-u}}{u^{-1}}du$, $t>0$,  and denote its inverse function
by $e^*(s)$.
For $\mu\in I_{\log}(\rd)$,
$$
\Psi (\mu) =\law\left ( \int _0^{\infty} e^*(s) d\xsm\right ).
$$
(5) ($\mathcal G$-mapping)
Let 
$h(t) = \int_t^{\infty} e^{-u^2} du$,  $t >0$, and 
denote its  inverse function by $h^*(s)$.
For $\mu \in I(\rd)$,
\begin{equation*}
\mathcal G (\mu) = \law \left (\int_0^{\sqrt{\pi}/2}h^*(s) dX_s ^{(\mu)} \right ).
\end{equation*}
\end{defn}

\begin{rem}
Letting $\mfr D$ denote the domain, we have $\mfr D(\mcal U)=\mfr D(\Up)
=\mfr D(\mcal G)=I(\rd)$ and $\mfr D(\Ph)=\mfr D(\Ps)=I_{\log}(\rd)$.
Recall that $\mfr D(\Ph)$ and $\mfr D(\Ps)$ are defined as the class of 
$\mu\in I(\rd)$ such that $\int_0^t e^{-s}d\xsm$ and $\int_0^t e^*(s)d\xsm$,
respectively, are convergent in probability as $t\to\infty$. 
For two mappings $\Ph_1$ and $\Ph_2$, the composition $\Ph_2 \Ph_1$ are 
defined as $\mfr D(\Ph_2 \Ph_1)=\{\mu\colon \mu\in \mfr D(\Ph_1)\text{ and }
\Ph_1(\mu)\in\mfr D(\Ph_2)\}$ and $(\Ph_2 \Ph_1)(\mu)=\Ph_2(\Ph_1(\mu))$ for
$\mu\in \mfr D(\Ph_2 \Ph_1)$. 
It is known that $\Psi=\Up\Phi=\Phi\Up$ ([3]),
where the equality of the domains is also implied.
\end{rem}

The following are characterizations of the classes in the previous section in terms of
the mappings above, or equivalently, in terms of stochastic integrals with respect to L\'evy processes.

\begin{thm}
\par
\n
$(1)$ $U(\rd) = \mathcal U (I(\rd))$. {\rm ([5])}\\
$(2)$ $B(\rd) = \Up (I(\rd))$. {\rm ([3])}\\
$(3)$ $L(\rd) = \Phi(I_{\log}(\rd))$. {\rm ([18] and others.) }\\
$(4)$ $T(\rd)= \Psi(I_{\log}(\rd))$. {\rm ([3])}\\
$(5)$ $G(\rd) = \mathcal G (I(\rd))$. 
\end{thm}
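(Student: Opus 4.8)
The plan is to handle all five identities by one scheme and to spell out only part $(5)$, since parts $(1)$--$(4)$ are precisely the statements of the cited works [5], [3], [18], [3]. The common tool is the description of the L\'evy measure of a stochastic integral: if $\mu\in I(\rd)$ has L\'evy measure $\nu$ and $f\colon(0,T)\to(0,\infty)$ is measurable with $\int_0^T f(s)\,dX_s^{(\mu)}$ convergent, then its law lies in $I(\rd)$ and has L\'evy measure
\begin{equation*}
\wt\nu(B)=\int_0^T ds\int_{\rd}1_B(f(s)x)\,\nu(dx),\qquad B\in\mathcal B(\rd\setminus\{0\}).
\end{equation*}
Since $f(s)>0$, multiplication by $f(s)$ preserves directions, so if $(\ld(d\xi),\nu_\xi(dr))$ is a polar decomposition of $\nu$, then $(\ld(d\xi),\wt\nu_\xi(dr))$ is one of $\wt\nu$, where $\wt\nu_\xi(B')=\int_0^T ds\int_0^\infty 1_{B'}(f(s)r)\,\nu_\xi(dr)$. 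The problem thus reduces to the radial map $\nu_\xi\mapsto\wt\nu_\xi$, and the five cases differ only in the choice of $f$ (e.g.\ $f(s)=s$ for $\mathcal U$, $f(s)=\log(s^{-1})$ for $\Up$, giving decreasing resp.\ completely monotone densities).

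For $(5)$ I take $T=\sqrt\pi/2$ and $f=h^*$. Note $\int_0^{\sqrt\pi/2}h^*(s)^2\,ds=\int_0^\infty t^2e^{-t^2}\,dt=\sqrt\pi/4<\infty$, which is why $\mfr D(\mathcal G)=I(\rd)$. Substituting $t=h^*(s)$, so that $s=h(t)$ and $ds=-e^{-t^2}\,dt$, and then $\rho=tr$, the radial component of $\mathcal G(\mu)$ becomes
\begin{equation*}
\wt\nu_\xi(d\rho)=\Bigl(\int_0^\infty e^{-\rho^2/r^2}\,r^{-1}\,\nu_\xi(dr)\Bigr)d\rho
=\wt g_\xi(\rho^2)\,d\rho,\qquad
\wt g_\xi(x)=\int_0^\infty e^{-x/r^2}\,r^{-1}\,\nu_\xi(dr).
\end{equation*}
Because $x\mapsto e^{-x/r^2}$ is completely monotone on $(0,\infty)$ for each fixed $r$, and a mixture of completely monotone functions is completely monotone, $\wt g_\xi$ is completely monotone; measurability in $\xi$ is inherited from that of $\nu_\xi$. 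Hence $\mathcal G(\mu)\in G(\rd)$, giving $\mathcal G(I(\rd))\subseteq G(\rd)$.

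For the reverse inclusion, start from $\mu\in G(\rd)$ with radial component $g_\xi(r^2)\,dr$, $g_\xi$ completely monotone. By Bernstein's theorem $g_\xi(x)=\int_{(0,\infty)}e^{-xs}\,\Gm_\xi(ds)$ for a measure $\Gm_\xi$; L\'evy integrability forces $g_\xi(\infty)=0$, so $\Gm_\xi(\{0\})=0$. Pushing $\Gm_\xi$ forward under $s\mapsto r=s^{-1/2}$ and inserting the Jacobian produces a measure $\nu_\xi^0$ with $\int_0^\infty e^{-x/r^2}r^{-1}\nu_\xi^0(dr)=g_\xi(x)$, formally $\nu_\xi^0(dr)=2r^{-2}\gamma_\xi(r^{-2})\,dr$ when $\Gm_\xi(ds)=\gamma_\xi(s)\,ds$. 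Taking $\mu_0\in I(\rd)$ with directional measure $\ld$, radial components $\nu_\xi^0$, Gaussian part $(4/\sqrt\pi)A$ where $A$ is that of $\mu$, and a suitably matched drift, the computation of the previous paragraph yields $\mathcal G(\mu_0)=\mu$.

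The main obstacle is this surjectivity step. One must (i) choose the Bernstein representing measures $\Gm_\xi$ measurably in $\xi$, (ii) verify that $\nu_\xi^0$ together with $\ld$ satisfies $\int_S\ld(d\xi)\int_0^\infty(r^2\wedge1)\,\nu_\xi^0(dr)<\infty$, so that $\mu_0$ is genuinely infinitely divisible, and (iii) confirm convergence of the defining stochastic integral and correct bookkeeping of the Gaussian and drift parts. Here the fact that $\mfr D(\mathcal G)=I(\rd)$, with no logarithmic-moment constraint of the kind entering $\Ph$ and $\Ps$, removes the domain subtleties present in $(3)$ and $(4)$; the integrability in (ii) is the one computation that has to be carried out, and it follows by tracking $(r^2\wedge1)$ through the explicit change of variables.
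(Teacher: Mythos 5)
Your proposal is essentially correct, but note that the paper itself contains no proof of this theorem: parts (1)--(4) are pure citations (to [5], [3], [18], [3]), and for part (5) the paper only records in Remark 2.5 that the proof of the symmetric case in [1] carries over. So for (1)--(4) you do exactly what the paper does, while for (5) you supply the argument the paper omits; your route is the standard one and it works. The transformation formula for the L\'evy measure reduces everything to the radial map, the change of variables $t=h^*(s)$, $\rho=tr$ correctly yields $\wt g_\xi(x)=\int_0^\infty e^{-x/r^2}r^{-1}\nu_\xi(dr)$ (finite for $x>0$ and $\ld$-a.e.\ $\xi$ since $e^{-x/r^2}r^{-1}\les c_x(r^2\wedge1)$), and Bernstein's theorem inverts it. Of the three obstacles you flag, only routine work remains: (i) measurability in $\xi$ of the Bernstein measures $\Gm_\xi$ is precisely the point the paper itself settles in the analogous Proposition 4.4 by appealing to p.\,218 of [10] or p.\,17 of [8], and the same reference applies here; (iii) is immediate because $\int_0^{\sqrt{\pi}/2}h^*(s)\,ds=1/2\neq0$ makes the affine drift equation solvable; and (ii) does close as you predict, since writing $r^{-1}\nu_\xi^0(dr)$ as the image of $\Gm_\xi$ under $s\mapsto s^{-1/2}$ gives
\begin{equation*}
\int_0^\infty(r^2\wedge1)\,\nu_\xi^0(dr)=\int_{(0,\infty)}(s^{-1}\wedge1)s^{-1/2}\,\Gm_\xi(ds),
\end{equation*}
while the hypothesis $\int_0^\infty(r^2\wedge1)g_\xi(r^2)\,dr<\infty$ equals $\int_{(0,\infty)}\Gm_\xi(ds)\bigl(\int_0^1r^2e^{-r^2s}\,dr+\int_1^\infty e^{-r^2s}\,dr\bigr)<\infty$, and the bracket is bounded above and below by constant multiples of $s^{-1/2}\wedge s^{-3/2}$, so the two conditions are equivalent. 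In short, your scheme is a legitimate self-contained proof of (5) (and, mutatis mutandis, of (1)--(4) via the corresponding kernels), whereas the paper buys the same conclusions by citation; the price of your route is the measurable-selection and integrability bookkeeping, all of which is standard.
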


\begin{rem}
In [1], the equality (5) is proved within $I_{\rm sym}(\rd)$.
However, the same proof works for proving (5).
\end{rem}

We now define the nested subclasses of the 
five classes above by iterating the respective mappings.

Let $U_0(\rd) = U(\rd)$,  $B_0(\rd) = B(\rd)$, $L_0(\rd)=L(\rd)$, 
$T_0(\rd) =T(\rd)$ and $G_0(\rd) = G(\rd)$.
In the following, the $m$-th power of a mapping denotes $m$ times composition
of the mapping, with the domain being the class of all $\mu$ for which the 
$m$-th power is definable.

\begin{defn}
For $m=0, 1, 2, ...$, let

\n
(1) $U_m(\rd) = {\mathcal U}^{m+1}(I(\rd))$,

\n
(2)
$B_m(\rd) = {\Up}^{m+1}(I(\rd))$,

\n
(3)
$L_m(\rd) = {\Phi}^{m+1}(I_{\log ^{m+1}}(\rd))$,

\n
(4)
$T_m(\rd) = {\Psi}^{m+1}(I_{\log ^{m+1}}(\rd))$,

\n
and

\n
(5)
$G_m(\rd) = {\mathcal G}^{m+1}(I(\rd))$,

\n
and let
$U_{\infty}(\rd)=\bigcap_{m=0}^{\infty}U_m(\rd)$, 
$B_{\infty}(\rd)=\bigcap_{m=0}^{\infty}B_m(\rd)$, 
$L_{\infty}(\rd)=\bigcap_{m=0}^{\infty}L_m(\rd)$, 
$T_{\infty}(\rd)=\bigcap_{m=0}^{\infty}T_m(\rd)$,
and
$G_{\infty}(\rd)=\bigcap_{m=0}^{\infty}G_m(\rd)$.
\end{defn}

\begin{rem}
\par
\n
(1)
The original definition of $L_{m}(\rd)$ in [17] and 
[10] is different from ours, but it is known that
they are the same.  See [15] or Lemma 4.1 of [3].\\
(2)
We have $\mfr D(\Ph^{m+1})=\mfr D(\Ps^{m+1})=I_{\log ^{m+1}}(\rd)$.
This will be shown in the proof of Lemma \ref{l3.8}.\\
(3)
$T_m(\rd)$ here is different from that in [3], where $T_m(\rd)$
denotes $\Up(L_m(\rd))$.\\
(4)
$G_m(\rd)$ here is different from that in [1], where everything is discussed within $I_{\rm sym}(\rd)$.
\end{rem}

As mentioned in the Introduction, the following are known.

\begin{prop}\label{p2.8} {\rm ([6]) }
$U_{\infty}(\rd)= L_{\infty}(\rd)$.
\end{prop}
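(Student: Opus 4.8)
The plan is to reduce everything to the radial components via Proposition \ref{p2.1} and to recognize both mappings as explicit integral operators on the radial tail. Writing $N_\xi(t)=\nu_\xi((t,\infty))$ for the tail of the radial component and passing to the logarithmic variable through $M_\xi(x):=N_\xi(e^x)$, $x\in\R$, a direct computation from the action of a stochastic integral on a L\'evy measure shows that $\mathcal U$ leaves the spherical part $\ld$ unchanged and sends $M_\xi(x)\mapsto\int_0^\infty e^{-v}M_\xi(x+v)\,dv$, while $\Phi$ sends $M_\xi(x)\mapsto\int_0^\infty M_\xi(x+v)\,dv$. Equivalently, with $D=d/dx$, the inverse operations are $M_\xi\mapsto(1-D)M_\xi$ for $\mathcal U$ and $M_\xi\mapsto-DM_\xi$ for $\Phi$. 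Hence $\mu\in U_m(\rd)$ if and only if $(1-D)^{m+1}M_\xi$ is, for $\ld$-a.e.\ $\xi$, the radial tail of a genuine L\'evy measure, and $\mu\in L_m(\rd)$ if and only if $(-D)^{m+1}M_\xi$ is such a tail (the domains $I(\rd)$ and $I_{\log^{m+1}}(\rd)$ translating into integrability of $M_\xi$ and of its iterated integrals).

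The first, easy, inclusion $L_\infty(\rd)\subseteq U_\infty(\rd)$ would follow from $L_m(\rd)\subseteq U_m(\rd)$ for every $m$: using the operator identity $(1-D)(-D)^{-1}=I+\mathbf L$, where $\mathbf L$ denotes the integration $(\mathbf Lf)(x)=\int_x^\infty f(y)\,dy$, one gets $(1-D)^{m+1}M_\xi=\sum_{k=0}^{m+1}\binom{m+1}{k}\mathbf L^k\big[(-D)^{m+1}M_\xi\big]$, a positive combination of iterated integrals of a valid radial tail, which is again a valid radial tail.

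For the substantial half I would characterize the two infinite intersections outright. Collecting the nonnegativity and monotonicity conditions over all $m$, membership of $\mu$ in $L_m(\rd)$ for every $m$ is equivalent to $(-1)^jM_\xi^{(j)}\ges0$ for all $j$, i.e.\ to $M_\xi$ being completely monotone; by Bernstein's theorem this means $N_\xi(t)=\int_{[0,\infty)}t^{-\al}\Sigma_\xi(d\al)$. On the other hand, the conjugation identity $(1-D)^{m+1}=E(-D)^{m+1}E^{-1}$, where $E$ is multiplication by $e^{x}$, shows that $\mu\in U_m(\rd)$ for every $m$ is equivalent to complete monotonicity of $e^{-x}M_\xi(x)$, i.e.\ to $N_\xi(t)=\int_{[0,\infty)}t^{1-\al}\Sigma_\xi(d\al)$. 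These two representations look different, but the requirement that $\nu$ be a genuine L\'evy measure forces the representing measures onto exponents in $(0,2)$ only: a pure tail $t^{-\bt}$ is a valid radial L\'evy tail exactly when $\bt\in(0,2)$ (decreasing to $0$ needs $\bt>0$; the bound $\int_0^1 r^2\nu_\xi(dr)<\infty$ needs $\bt<2$), and by positivity and Tonelli no mass can hide at the forbidden exponents. Thus both $U_\infty(\rd)$ and $L_\infty(\rd)$ reduce to the class of L\'evy measures whose radial tail has the form $\int_{(0,2)}t^{-\bt}\Sigma_\xi(d\bt)$ with $\Sigma_\xi$ measurable in $\xi$ --- that is, radial mixtures of stable tails --- and therefore coincide.

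The main obstacle is precisely this last step: establishing the $U_\infty$ characterization, since $(1-D)$ does not directly control the signs of the derivatives of $M_\xi$ (this is what the conjugation by $E$ repairs), and then verifying rigorously that the L\'evy condition collapses the a priori different exponent ranges $[0,\infty)$ and $[-1,\infty)$ to the common interval $(0,2)$. Alongside this one must handle the routine but necessary bookkeeping: the measurable-in-$\xi$ selection in the Bernstein representation and the propagation of the $\log^{m+1}$-moment and domain conditions to the limit.
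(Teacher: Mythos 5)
Your proposal is correct and follows essentially the same route as the paper: the substantive inclusion $U_\infty(\rd)\subset L_\infty(\rd)$ is obtained there by showing that $h_\xi(u)=e^{-u}\nu_\xi((e^u,\infty))$ (your $e^{-x}M_\xi(x)$) is completely monotone, applying Bernstein's theorem, and using the L\'evy integrability conditions to confine the exponents to $(0,2)$ so as to match the known mixed-stable representation of $L_\infty(\rd)$. The only cosmetic difference is that the paper cites Jurek for $L_m(\rd)\subset U_m(\rd)$ rather than arguing via your binomial identity, and quotes Sato's representation of $L_\infty(\rd)$ rather than re-deriving it from complete monotonicity of $M_\xi$.
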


\begin{rem}
This is Corollary 7 of [6].  
However, as Jurek's proof is not easy for us to follow, 
we will give in the last section of this paper an alternative proof 
for that $U_{\infty}(\rd)\subset L_{\infty}(\rd)$
which directly uses our polar decomposition of L\'evy measures 
and shows that a representation of the L\'evy measure of $\mu$ in $U_{\infty}(\rd)$
is exactly the same as that of $\mu$ in $L_{\infty}(\rd)$
shown in [10].
It is noted that our proof also depends on Jurek's basic idea.
\end{rem}

\begin{prop}\label{p2.10} {\rm ([17] and [10])}
$L_{\infty}(\rd) = \overline{S(\rd)}$,
where the closure is taken under weak convergence and convolution.
\end{prop}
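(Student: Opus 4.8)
The plan is to prove the two inclusions $\overline{S(\rd)}\subset L_{\infty}(\rd)$ and $L_{\infty}(\rd)\subset\overline{S(\rd)}$ separately, and the work is very unevenly distributed between them.

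For the first inclusion I would begin by recording that every $\al$-stable law lies in each $L_m(\rd)$. The key point is that $\Phi$ preserves stability: if $\mu\in S_{\al}(\rd)$ and $\{\xsm\}$ is the associated self-similar $\al$-stable L\'evy process, then a change of variables in the cumulant, using the homogeneity $C_{\mu}(e^{-s}z)=e^{-\al s}C_{\mu}(z)$ (modulo the usual logarithmic correction in the non-strict case), shows that $\int_0^{\infty}e^{-s}d\xsm$ is again $\al$-stable, and conversely every element of $S_{\al}(\rd)$ arises this way, so that $\Phi(S_{\al}(\rd)\cap I_{\log}(\rd))=S_{\al}(\rd)$. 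Since stable laws have finite $(\log^+|x|)^m$-moments of every order, $S(\rd)\subset I_{\log^{m+1}}(\rd)$, and iterating gives $S(\rd)\subset\Phi^{m+1}(I_{\log^{m+1}}(\rd))=L_m(\rd)$ for every $m$. Because each $L_m(\rd)$ is closed under convolution and weak convergence (a classical property of these classes; note also $\Phi(\mu_1*\mu_2)=\Phi(\mu_1)*\Phi(\mu_2)$), it follows that $\overline{S(\rd)}\subset L_m(\rd)$ for all $m$, hence $\overline{S(\rd)}\subset\bigcap_{m}L_m(\rd)=L_{\infty}(\rd)$.

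For the reverse inclusion I would work through the polar decomposition of Proposition~\ref{p2.1}. A distribution in $L(\rd)$ has radial component $\nu_{\xi}(dr)=k_{\xi}(r)r^{-1}dr$ with $k_{\xi}$ decreasing, and membership in $L_m(\rd)$ is equivalent to a higher-order monotonicity condition on $k_{\xi}$ (the $L_m$-characterization of [17] and [10]). Passing to the limit $m\to\infty$, one obtains the representation $k_{\xi}(r)=\int_{(0,2)}r^{-\al}\,\Gm_{\xi}(d\al)$; that is, the radial component of any $\mu\in L_{\infty}(\rd)$ is a mixture over $\al$ of the stable kernels giving $r^{-\al-1}dr$, with the Gaussian part unrestricted. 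I would then approximate such a $\mu$ by finite convolutions of genuine stable laws: discretize the angular measure $\ld$ and, for each $\xi$, the mixing measure $\Gm_{\xi}$ in $\al$, so that the resulting finite convolution of stable distributions has L\'evy triplet converging to that of $\mu$ (with the Gaussian part realized in the $\al\uparrow2$ limit and with suitably chosen compensating drifts), and invoke the equivalence of convergence of L\'evy triplets with weak convergence of infinitely divisible laws. This places $\mu$ in $\overline{S(\rd)}$.

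I expect the whole difficulty to reside in the reverse inclusion, and within it in two places. First, extracting the integral representation of $k_{\xi}$ for $\mu\in L_{\infty}(\rd)$ as the limiting form of the $L_m$-monotonicity conditions is the crux; it is here that one actually sees the stable kernels emerge. Second, the approximation step is delicate because matching L\'evy measures is not enough: the centering and truncation terms of the discretized stable convolutions must be chosen carefully so that the triplets — and hence the laws — converge to $\mu$. The first inclusion, by contrast, is essentially formal once the stability-preserving property of $\Phi$ and the closedness of the classes $L_m(\rd)$ are in hand.
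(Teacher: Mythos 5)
The paper does not actually prove this proposition: it is quoted from Urbanik [17] and Sato [10], so your outline should be measured against those references and against the closely parallel argument the authors \emph{do} write out in Section 4. Your first inclusion is complete and matches the technique used throughout the paper (compare Lemma \ref{l3}, part (5) of Theorem \ref{t2}, and the treatment of $T_{\infty}(\rd)$ in the proof of Theorem \ref{t1}): $\Phi$ maps $S_{\al}(\rd)$ onto itself by the homogeneity of the cumulant, stable laws have logarithmic moments of every order, and each $L_m(\rd)$ is c.c.s.s.\ (Remark \ref{r3.3}(3)), whence $\overline{S(\rd)}\subset L_m(\rd)$ for every $m$.

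For the reverse inclusion you have named the right two ingredients --- the limiting representation $k_{\xi}(r)=\int_{(0,2)}r^{-\al}\Gm_{\xi}(d\al)$ and the approximation of such a law by finite convolutions of stable laws with matched triplets --- but you have not executed either, and the first is essentially the entire content of Theorem 3.4 of [10]. The paper's Section 4 shows exactly how this step goes in the parallel $U_{\infty}$ case: one forms $h_{\xi}(u)=e^{-u}\nu_{\xi}((e^u,\infty))$, upgrades the $L_m$ (resp.\ $U_m$) conditions to $m$-fold monotonicity (Proposition \ref{p9}), concludes complete monotonicity of $h_{\xi}$ on all of $\R$, applies Bernstein's theorem on each half-line $u>a$ and patches the representing measures into a single $H_{\xi}$, and then uses integrability of $(r^2\wedge 1)\nu_{\xi}(dr)$ near $0$ and $\infty$ to force the mixing measure onto $(0,2)$; the measurability of $\xi\mapsto H_{\xi}$ and the disintegration $\Gm(d\al)\ld_{\al}(d\xi)=\ld(d\xi)\Gm_{\xi}(d\al)$ also require justification. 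A self-contained proof must do the analogous work for $k_{\xi}$, and must also verify the closing approximation step (that the discretized stable convolutions can be centered so that the triplets, hence the laws, converge, and that a $2$-stable factor absorbs the Gaussian part directly --- no $\al\uparrow 2$ limit is needed). As written, the second half of your argument is a correct road map for the proof in [10], not a proof; nothing in it would fail, but the crux is exactly where you have left it unexecuted.
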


Our main result in this paper is the following.

\begin{thm}\label{t1}
$$
U_{\infty}(\rd) = B_{\infty}(\rd) = L_{\infty}(\rd) = T_{\infty}(\rd) 
= G_{\infty}(\rd) = \overline{S(\rd)}.
$$
\end{thm}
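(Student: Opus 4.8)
The plan is to prove the five-fold equality by a two-sided squeeze anchored at the two identities already in hand, $U_\infty(\rd)=L_\infty(\rd)=\overline{S(\rd)}$ (Propositions~\ref{p2.8} and~\ref{p2.10}). For the lower bound I would show $\overline{S(\rd)}\subseteq X_\infty(\rd)$ simultaneously for all five classes $X\in\{U,B,L,T,G\}$. For the upper bound, since $U_\infty(\rd)=\overline{S(\rd)}$ is already available, it suffices to establish $X_\infty(\rd)\subseteq U_\infty(\rd)$ for $X\in\{T,B,G\}$ ($L$ being done), and this I would reduce to the finite-level inclusions $X_m(\rd)\subseteq U_m(\rd)$, passed to the intersection over $m$. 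Together these give $\overline{S(\rd)}\subseteq X_\infty(\rd)\subseteq U_\infty(\rd)=\overline{S(\rd)}$ for each class, which is the theorem.

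The lower bound I expect to be uniform and essentially formal. A law in $S_\al(\rd)$, $0<\al<2$, has radial component $\nu_\xi(dr)=c_\xi r^{-1-\al}dr$, and the power function $r^{-1-\al}$ is an eigenfunction of the one-dimensional radial transform attached to each of the five stochastic integrals; since all the integrands are deterministic, integrating an $\al$-stable Lévy process against any of them again produces an $\al$-stable law. Hence each $\al$-stable law is the $\Ps^{m+1}$-image (and the $\mathcal{G}^{m+1}$-, $\mathcal{U}^{m+1}$-, etc.\ image) of an $\al$-stable, hence admissible, law, so $S_\al(\rd)\subseteq X_m(\rd)$ for every $m$; closedness of the limit class under convolution and weak convergence then yields $\overline{S(\rd)}\subseteq X_\infty(\rd)$. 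As a byproduct I would record, from the algebraic identity $\Ps=\Up\Ph=\Ph\Up$ and the commutation it expresses, the factorizations $T_m(\rd)=\Up^{m+1}(L_m(\rd))\subseteq\Up^{m+1}(I(\rd))=B_m(\rd)$ and (using that $\Up$ preserves the relevant logarithmic-moment classes) $T_m(\rd)\subseteq\Ph^{m+1}(I_{\log^{m+1}}(\rd))=L_m(\rd)$, giving an independent check that $T_\infty(\rd)\subseteq L_\infty(\rd)=\overline{S(\rd)}$.

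The substance is the upper bound $X_m(\rd)\subseteq U_m(\rd)$, which cannot be read off from any composition identity relating $\Up$, $\mathcal{G}$ and $\mathcal{U}$, so I would argue directly on the Lévy density of the radial component. Each mapping replaces that density $\ell_\xi$ by an explicit integral operator; for instance $\mathcal{U}$ sends $\ell_\xi(r)$ to $\int_r^\infty \ell_\xi(u)u^{-1}du$ and $\Up$ sends it to $\int_0^\infty \ell_\xi(v)e^{-r/v}v^{-1}dv$, and the base inclusions $T(\rd)\subseteq B(\rd)\subseteq G(\rd)\subseteq U(\rd)$ are exactly comparisons of these operators through the complete-monotonicity facts (1)--(3) recalled above. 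I would show these comparisons are stable under iteration, so that after $m$ applications the density produced by $\Ps$, $\Up$ or $\mathcal{G}$ still satisfies the $(m+1)$-fold monotonicity that characterizes $U_m(\rd)$; intersecting over $m$ closes the squeeze. This is the natural place to isolate the promised general theorem: a statement that for radial transforms of this type the intersection $\bigcap_m \mathcal{F}^{m+1}(\cdot)$ of the iterated ranges consists precisely of the mixtures of the eigen-densities $r^{-1-\al}$, $0<\al<2$, together with a Gaussian part, which is another description of $\overline{S(\rd)}$, and which applies to all five transforms at once.

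The main obstacle, as I see it, is the generalized type $G$ class. Because $\mathcal{G}$ does not factor through $\mathcal{U}$, $\Up$ or $\Ph$, the inclusion $G_m(\rd)\subseteq U_m(\rd)$ must be extracted by hand from the iterated transform of $g_\xi(r^2)$ rather than borrowed from an algebraic relation, and one has to verify that the complete monotonicity of $g_\xi(r^2)$, after $m$ passages through $\mathcal{G}$, is strong enough to land inside the $(m+1)$-fold monotonicity defining $U_m(\rd)$ yet still weak enough to admit every power mixture. Controlling both of these bounds simultaneously for all $m$, uniformly in the spherical variable $\xi$, and confirming that the only densities surviving at every level are the power-law mixtures, is the technical heart of the proof; the general theorem is what I would formulate precisely so as to carry out this verification once and then read off all five equalities as special cases.
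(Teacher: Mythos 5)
Your overall architecture --- squeezing each $X_\infty(\rd)$ between $\overline{S(\rd)}$ from below and $U_\infty(\rd)=\overline{S(\rd)}$ from above, with the lower bound coming from the fact that each mapping fixes $S_\al(\rd)$ combined with closure of each $X_m(\rd)$ under convolution and weak convergence --- is the paper's, and your treatment of $T$ via the factorization $T_m(\rd)=\Up^{m+1}(L_m(\rd))$ and preservation of logarithmic moments matches Lemma \ref{l3.8} and the end of the paper's proof. The gap is in the upper bound for $B$ and $G$. You correctly identify $G_m(\rd)\subset U_m(\rd)$ as the decisive step, note that it cannot be read off from a factorization of $\mcal G$ through $\mcal U$, propose to verify by hand that the $m$-fold iterated radial transform of a completely monotone density retains the $(m{+}1)$-fold monotonicity characterizing $U_m(\rd)$, and then concede that carrying this out uniformly in $m$ and $\xi$ is ``the technical heart of the proof.'' That computation is never supplied; as written, the proposal leaves the one inclusion that actually requires work unproved, and the direct iterated-monotonicity route you sketch would be genuinely painful to execute.

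The idea you are missing is that although $\mcal G$ and $\Up$ do not \emph{factor} through $\mcal U$, they \emph{commute} with it: for any two mappings $\Ph_{f_0},\Ph_{f_1}$ given by square-integrable integrands on finite intervals, Fubini applied to $C_{\Ph_{f_1}(\Ph_{f_0}(\mu))}(z)=\int_0^{s_1}du\int_0^{s_0}C_\mu(f_0(s)f_1(u)z)\,ds$ yields $\Ph_{f_1}\Ph_{f_0}=\Ph_{f_0}\Ph_{f_1}$ (Lemma \ref{l2}). This reduces the entire hierarchy to a single one-step inclusion: if $\Ph_f^{m}(I)\subset\mcal U^{m}(I)$, then $\Ph_f^{m+1}(I)\subset\Ph_f(\mcal U^{m}(I))=\mcal U^{m}(\Ph_f(I))\subset\mcal U^{m+1}(I)$. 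The one-step inclusion $\Ph_f(I(\rd))\subset\mcal U(I(\rd))$ needs no complete monotonicity at all --- writing $f$ as the inverse of $g(t)=\int_t^{t_0}p(u)\,du$, the transformed radial density is $\wt l_{\xi}(u)=\int_{u/t_0}^{\infty}p(u/r)r^{-1}\nu_{\xi}(dr)$, which is decreasing in $u$ simply because $p$ is decreasing (Theorem \ref{t2}(3)). Consequently the general theorem worth formulating is not your proposed classification of $\bigcap_m\mcal F^{m+1}$ as power mixtures (that essentially re-derives Propositions \ref{p2.8} and \ref{p2.10}, which the paper invokes as known) but the sufficient condition of Theorem \ref{t2}: any $\Ph_f$ with $f=g^{-1}$, $p$ positive and decreasing, $\int_0^{t_0}(1+u^2)p(u)\,du<\infty$, satisfies $\bigcap_{m}\Ph_f^{m}(I(\rd))=U_\infty(\rd)$; both $\Up$ and $\mcal G$ are of this form, and $T$ is then handled separately as you and the paper both do.
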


Except $T_{\infty}(\rd)$, we
 will prove this theorem from a more general result (Theorem \ref{t2}), where a
sufficient condition for the limit of the nested subclasses of
a class to be equal to $U_{\infty}(\rd)$ is given.
For $Y_{\infty}(\rd)$, we will show that $T_{\infty}(\rd)= L_{\infty}(\rd)$.
\vskip 10mm

\section{Proof of the main theorem (Theorem \ref{t1})}

In order to prove our main theorem (Theorem \ref{t1}), we need several preparations.

\begin{defn}\label{d3.1}
A class $M$ of distributions on $\rd$ is said to be 
{\it completely closed in the strong sense} (c.c.s.s.),
if $M \subset I(\rd)$ and if the following are satisfied.\\
(1) It is closed under convolution.\\
(2) It is closed under weak convergence.\\
(3) If $X$ is an $\rd$-valued random variable with $\law (X)\in M$, then 
$\law (cX+b)\in M$ for any $c>0$ and $b\in\rd$.\\
(4) $\mu\in M$ implies $\mu ^{s*} \in M$ for any $s>0$, where $\mu ^{s*}$ is 
the distribution
with the characteristic function $(\wh\mu(z))^s$.
\end{defn}

\begin{prop}\label{p3.2}
Fix $0<a<\infty$.
Suppose that $f$ is square integrable on $(0,a)$ and $\int_0^a f(s)ds\neq0$.
Define a mapping $\Phi_f$ by
$$
\Phi_f (\mu) = \law \left (\int_0^a f(s) dX_s^{(\mu)}\right ).
$$
Then the following are true.

\n
$(1)$ $\mfr D(\Ph_f)=I(\rd)$.\\
$(2)$ For all $\mu\in I(\rd)$, $\int_0^a |C_{\mu}(f(s)z)|ds<\infty$
and $C_{\Phi_f  (\mu)}(z) = \int_0^a C_{\mu}(f(s)z)ds$.\\
$(3)$ If $M$ is c.c.s.s., then 
$\Phi_f (M) \subset M$.\\
$(4)$ If $M$ is c.c.s.s., then $\Phi_f  (M)$ is also c.c.s.s.
\end{prop}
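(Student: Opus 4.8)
The plan is to derive the whole proposition from the cumulant identity in $(2)$, proving the four assertions in the order $(2)$, $(1)$, $(3)$, $(4)$. For $(2)$ I would first record the elementary growth estimate $|C_{\mu}(w)|\les c_{\mu}(1+|w|^2)$ for all $w\in\rd$, with $c_{\mu}$ depending only on the triplet $(A,\nu,\gm)$: bound the Gaussian and drift terms by $\tfrac12\|A\|\,|w|^2+|\gm|\,|w|$, and split the L\'evy integral at $|x|=1$, using $|e^{\mathrm i u}-1-\mathrm i u|\les\tfrac12 u^2$ on the inner part and a crude bound on the outer part, together with $\int(|x|^2\wedge1)\nu(dx)<\infty$ and $\nu(\{|x|>1\})<\infty$. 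Since $f\in L^2(0,a)$ and $a<\infty$, this yields $\int_0^a|C_{\mu}(f(s)z)|ds\les c_{\mu}(a+|z|^2\int_0^a f(s)^2ds)<\infty$, the integrability claim. For the identity $C_{\Ph_f(\mu)}(z)=\int_0^a C_{\mu}(f(s)z)ds$ I would verify it first for step functions $f$, where $\int_0^a f\,d\xsm$ is a finite sum of independent scaled increments and the identity reduces to additivity and stationarity of the cumulants of $\{\xsm\}$, and then approximate a general $f\in L^2$ by step functions $f_n\to f$ in $L^2$ (hence a.e.\ along a subsequence), passing to the limit by controlling the integrals via the quadratic bound (Vitali/dominated convergence) and identifying the limit through convergence of characteristic functions.

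For $(1)$ I would use the L\'evy--It\^o decomposition of $\{\xsm\}$ to write $\int_0^a f(s)d\xsm$ as the sum of: the deterministic drift $\gm\int_0^a f(s)ds$, finite since $L^2(0,a)\subset L^1(0,a)$; the Gaussian integral, convergent because $\int_0^a f(s)^2ds<\infty$; the compensated small-jump integral, whose $L^2$-convergence again reduces to $\int_0^a f(s)^2ds\cdot\int_{|x|\les1}|x|^2\nu(dx)<\infty$; and the large-jump part, an almost surely finite sum because the jumps with $|x|>1$ on $(0,a)$ are finite in number and occur at times avoiding the Lebesgue-null set where $f=\infty$. Hence the integral converges for every $\mu\in I(\rd)$, so $\mfr D(\Ph_f)=I(\rd)$.

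For $(3)$, with $M$ c.c.s.s.\ and $\mu\in M$, I would approximate $f$ in $L^2(0,a)$ by step functions and use $(2)$ to write, for $g=\sum_i c_i 1_{A_i}$ with disjoint intervals $A_i$ of length $m_i$,
\[
\Ph_g(\mu)=\big(\law(c_1 X_1^{(\mu)})\big)^{m_1*}*\cdots*\big(\law(c_n X_1^{(\mu)})\big)^{m_n*}.
\]
When the $c_i$ are positive, each factor lies in $M$ by closedness under the affine maps $x\mapsto c_i x$ and under $s$-th convolution powers (properties $(3)$, $(4)$ of c.c.s.s.), and the convolution lies in $M$ by property $(1)$; letting the approximants converge and using $C_{\Ph_{f_n}(\mu)}\to C_{\Ph_f(\mu)}$ together with closedness under weak convergence (property $(2)$) gives $\Ph_f(\mu)\in M$. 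The subtlety here is the sign of $f$: property $(3)$ supplies only positive dilations, so the argument runs verbatim when $f\ges0$, which covers all the kernels $\mcal U,\Up,\Ph,\Ps,\mcal G$ of interest; for a general real $f$ one splits $(0,a)$ into $\{f>0\}$ and $\{f<0\}$, on which the two integrals are independent, reducing matters to constant-sign integrands, the negative part then calling on invariance of $M$ under $x\mapsto-x$.

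Finally, for $(4)$ I would check the four defining properties of c.c.s.s.\ for $\Ph_f(M)$ directly from $(2)$. Closedness under convolution and under convolution powers follow, at the level of cumulants, from $\Ph_f(\mu_1)*\Ph_f(\mu_2)=\Ph_f(\mu_1*\mu_2)$ and $(\Ph_f(\mu))^{s*}=\Ph_f(\mu^{s*})$, since $M$ is closed under the corresponding operations. For the affine maps, $\law(cY+b)$ with $Y\sim\Ph_f(\mu)$ equals $\Ph_f\big(\law(cX_1^{(\mu)}+b')\big)$ provided $b'\big(\int_0^a f(s)ds\big)=b$; this is precisely where the hypothesis $\int_0^a f(s)ds\neq0$ enters, letting us solve for $b'$ and stay inside $M$. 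The step I expect to be the main obstacle is closedness under weak convergence: given $\Ph_f(\mu_n)\to\nu$ I must produce $\mu\in M$ with $\nu=\Ph_f(\mu)$. I would establish tightness of $\{\mu_n\}$ --- transferring control from $\Ph_f(\mu_n)$ back to $\mu_n$ via the cumulant identity and the nondegeneracy $\int_0^a f\neq0$ --- extract a weak limit $\mu_{n_k}\to\mu\in M$, and invoke continuity of $\Ph_f$ under weak convergence (again from $(2)$ and the quadratic bound) to conclude $\nu=\Ph_f(\mu)\in\Ph_f(M)$.
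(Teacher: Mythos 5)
Your overall route is essentially the paper's: you re-derive (1) and (2) from the quadratic bound $|C_{\mu}(w)|\les c_{\mu}(1+|w|^2)$ and step-function approximation where the paper simply cites Sato's stochastic-integration results; (3) is the same simple-function argument; and for (4) you verify the four axioms in the same way, with closedness under weak convergence resting on continuity of $\Ph_f$ plus precompactness of the preimage sequence $\{\mu_n\}$. Two cautions on (4). The precompactness transfer is where almost all of the work lies: one must convert the uniform triplet bounds for $\Ph_f(\mu_n)$ into uniform bounds on $(A_n,\nu_n,\gm_n)$ via the explicit formulas for the triplet of a stochastic integral, and the key inequality $\int(|x|^2\land1)\wt\nu_n(dx)\ges \mathrm{const}\int(|x|^2\land1)\nu_n(dx)$ needs a case analysis on $f$ (the paper separates the case where $|f|$ takes essentially only the two values $0$ and $c$); ``transferring control via the cumulant identity'' names this step but does not yet do it. Likewise your dominated-convergence argument for continuity needs the constant in the quadratic bound to be uniform in $n$, which again comes from the triplet bounds along a convergent sequence.

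The one genuine error is your treatment of sign-changing $f$ in (3). A c.c.s.s.\ class is closed under $x\mapsto cx+b$ only for $c>0$; invariance under $x\mapsto -x$ is not among the axioms and does not follow from them. For instance, the class of infinitely divisible laws on $\R$ whose L\'evy measure is concentrated on $(0,\infty)$ is c.c.s.s.\ but not reflection invariant, and $\Ph_f$ with $f\equiv-1$ maps it outside itself; so assertion (3) is actually false for negative or sign-changing $f$ without further hypotheses. The honest resolution is to assume $f\ges0$, which holds for every mapping to which the proposition is applied ($\mcal U$, $\Up$, $\mcal G$, and all the $\Ph_f$ of Theorem \ref{t2}). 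You deserve credit for noticing the difficulty --- the paper's own proof of (3) quietly has the same issue, since it allows $b_j\in\R$ in the simple functions --- but the fix you propose is not available from Definition \ref{d3.1}.
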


\n
{\it Proof.} Define $\wt f(s)$ as $\wt f(s)=f(s)$ for $s\in(0,a)$ and 
$\wt f(s)=0$ for $s\in [0,\infty)\setminus(0,a)$.
Since  $\wt f(s)$ is locally square integrable on $[0,\infty)$,
 $\int_B \wt f(s) dX_s^{(\mu)}$ is definable for all 
$\mu\in I(\rd)$ and all bounded Borel sets $B$ in $[0,\infty)$ 
by Proposition 3.4 of [13]. Then $\Ph_f(\mu)$ is the law of 
$\int_{(0,a)}\wt f(s)dX_s^{(\mu)}$. 
Hence (1) is true. (2) is a consequence of Proposition 2.17 of [13].

Proof of (3). Suppose that $M$ is c.c.s.s.\ and $\mu\in M$. We recall the definition 
of $\int_B \wt f(s) dX_s^{(\mu)}$ 
 in Sato [12] or [13]. 
A function $g(s)$ is
called a simple function if $g(s)=\sum_{j=1}^n b_j 1_{B_j}(s)$ for some $n$, 
where $B_1,\ldots,
B_n$ are disjoint Borel sets in $[0,\infty)$ and $b_1,\ldots,b_n\in \R$. For
such a simple function we define $\int_B g(s) dX_s^{(\mu)}=\sum_{j=1}^n 
b_j X^{(\mu)}(B\cap B_j)$ for any bounded Borel set $B$ in $[0,\infty)$, 
using the $\rd$-valued
independently scattered random measure $X^{(\mu)}$ induced by the process
$X_s^{(\mu)}$.
In our case the law of $Y=\int_B g(s) dX_s^{(\mu)}$ belongs to $M$, since
\[
C_{\law (Y)}(z)=\sum_{j=1}^n \int_{B\cap B_j} C_{\mu}(b_j z)ds
=\sum_{j=1}^n C_{\mu}(b_j z)\,\mrm{Leb} (B\cap B_j),
\]
where Leb denotes Lebesgue measure.
Definability of $\int_B \wt f(s) dX_s^{(\mu)}$ mentioned in the proof of (1) 
means that there are
simple functions $g_k(s)$, $k=1,2,\ldots$, such that $g_k(s)\to \wt f(s)$ a.e.\ 
as $k\to\infty$ and that,
for all bounded Borel sets $B$, $\int_B g_k(s)dX_s^{(\mu)}$ converges in 
probability to $\int_B \wt f(s) dX_s^{(\mu)}$ as $k\to\infty$.
Since $M$ is closed under weak convergence, it follows that $\Ph_f(\mu)\in M$.

Proof of (4). Suppose that $M$ is c.c.s.s.  
If $\mu_1$ and $\mu_2$ are in $M$, then $\Ph_f(\mu_1)*\Ph_f(\mu_2)
=\Ph_f(\mu_1*\mu_2)\in \Ph_f(M)$. Hence $\Ph_f(M)$ is closed under convolution.
For $c>0$ and $b\in\rd$, we have
\[
\int_0^a f(s)d(cX_s^{(\mu)}+bs)=c\int_0^a f(s)dX_s^{(\mu)}+b\int_0^a f(s)ds.
\]
Since $\int_0^a f(s)ds\neq0$, it follows from this that $\Ph_f(M)$ has 
property (3) of Definition \ref{d3.1}. We have, for $t>0$,
\[
tC_{\Ph_f(\mu)}(z)=t\int_0^a C_{\mu}(f(s)z)ds=\int_0^a C_{\mu^{t*}}(f(s)z)ds.
\]
Hence $\Ph_f(M)$ has 
property (4) of Definition \ref{d3.1}. 
 It remains to prove that $\Ph_f(M)$ is closed under weak 
convergence. We make use of the following fact for $\mu_n$, $n=1,2\ldots$, and
$\mu$ in $I(\rd)$:
\begin{equation}\label{cont}
\text{if $\mu_n\to\mu$, then $\Ph_f(\mu_n)\to\Ph_f(\mu)$.}
\end{equation}
To show this, let $\mu_n\to\mu$ and recall that
\begin{align*}
C_{\Ph_f(\mu_n)}(z)&=\int_0^a C_{\mu_n}(f(s)z)ds,\\
C_{\Ph_f(\mu)}(z)&=\int_0^a C_{\mu}(f(s)z)ds.
\end{align*}
and that
\[
C_{\mu_n}(f(s)z)\to C_{\mu}(f(s)z).
\]
Hence it is enough to show the existence of an integrable function $h(s)$
on $(0,a)$ such that $\sup_n |C_{\mu_n}(f(s)z)|\les c_z h(s)$
with constant $c_z$ depending only on $z$ and to use the
dominated convergence theorem.
Let $(A_n,\nu_n,\gm_n)$ be the triplet of $\mu_n$. Since $\mu_n$ is convergent,
we have
{\allowdisplaybreaks
\begin{gather}
\sup_n \tr A_n <\infty,\label{pc1}\\
\sup_n \int_{\rd}(|x|^2\land1)\nu_n(dx)<\infty,\label{pc2}\\
\sup_n |\gm_n|<\infty.\label{pc3}
\end{gather}
We have
\[
|C_\mu (z)|\les \frac12 (\tr\,A)|z|^2 +|\gm| |z|+
\int |g(z,x)|\nu(dx)
\]
with $g(z,x)=e^{i\langle z,x\rangle} -1-i\langle z,x\rangle/(1+|x|^2)$. Hence
\begin{align*}
|C_{\mu_n} (f(s)z)|&\les \frac12 (\tr\,A_n)|f(s)z|^2
+|\gm_n| |f(s)z|\\
&\quad+\int_{\R^d} |g(z,f(s)x)| \nu_n(dx)+\int_{\R^d} |g(f(s)z,x)-
g(z,f(s)x)| \nu_n(dx)\\
&=I_1+I_2+I_3+I_4\qquad\text{(say)}.
\end{align*}
Let $c'_z, c''_z,\ldots$ denote constants depending on $z$.
It follows from \eqref{pc1} and \eqref{pc3} that $I_1+I_2\les
c'_z(f(s)^2+|f(s)|)$. Since
$|g(z,x)|\les c''_z |x|^2 / (1+|x|^2)$, it follows from \eqref{pc2} that
\begin{align*}
I_3&\les c''_z\int_{\R^d}\frac{|f(s)x|^2}{1+|f(s)x|^2} \nu_n(dx)\\
&\les c''_z\left( f(s)^2\int_{|x|\les1}|x|^2\nu_n(dx)+
\int_{|x|>1} \nu_n(dx)\right)\\
&\les c'''_z (f(s)^2+1).
\end{align*}
Further, using 
\[
|g(uz,x)-g(z,ux)| \les |z|\frac{|x|^3(|u|+|u|^3)}{(1+|x|^2)(1+|ux|^2)}
\qquad\text{for $u\in\R$},
\]
we obtain, with $u=f(s)$,
\begin{align*}
I_4 &\les |z|\int_{\R^d}
\frac{|x|^3(|u|+|u|^3)}{(1+|x|^2)(1+|ux|^2)}\nu_n(dx)\\
&\les|z|\int_{|x|\les1}\left(|x|^3 |u|+\frac{|x|^2}{2}u^2\right)
\nu_n(dx)
+|z|\int_{|x|>1}\left(\frac{|ux|}{1+|ux|^2}
+\frac{|ux|}{1+|x|^2}\right)\nu_n(dx)\\
&\les|z|\int_{|x|\les1}\left(|x|^3 |u|+\frac{|x|^2}{2}u^2\right)
\nu_n(dx)+|z|\int_{|x|>1}\left(\frac{1}{2}+\frac{|u|}{2}\right)
\nu_n(dx)\\
&\les c''''_z (f(s)^2+|f(s)|+1)
\end{align*}
from \eqref{pc2}. Thus we get $h(s)$ as asserted. This proves \eqref{cont}.}
Now, let $\wt\mu_1, \wt\mu_2, \ldots$ be in $\Ph_f(M)$ and tend to $\wt\mu$.
For each $\wt\mu_n$ we can find $\mu_n$ such that $\wt\mu_n=\Ph_f(\mu_n)$.
Let $(\wt A_n,\wt\nu_n,\wt\gm_n)$ and $(A_n,\nu_n,\gm_n)$ be the triplets of 
$\wt\mu_n$ and $\mu_n$, respectively.
We claim that $\{\mu_n\colon n=1,2,\ldots\}$ is precompact, which is 
equivalent to \eqref{pc1}, \eqref{pc2}, \eqref{pc3}, plus
\begin{equation}\label{pc4}
\lim_{l\to\infty}\sup_n \int_{|x|>l} \nu_n(dx)=0
\end{equation}
(see p.\,13 of [3]). 
Since $\{\wt\mu_n\}$ is precompact, \eqref{pc1}--\eqref{pc4} hold for
$(\wt A_n,\wt\nu_n,\wt\gm_n)$ in place of $(A_n,\nu_n,\gm_n)$.
Recall that
{\allowdisplaybreaks
\begin{gather}
\wt A_n=\left(\int_0^a f(s)^2 ds\right) A_n,\label{rep1}\\
\wt\nu_n(B)=\int_0^a ds\int_{\rd} 1_B(f(s)x)\nu_n(dx),\qquad B\in
\mcal B(\rd\setminus\{0\}),\label{rep2}\\
\wt\gm_n=\int_0^a f(s)ds\left(\gm_n+\int_{\rd}x\left(\frac{1}{1+|f(s)x|^2}
-\frac{1}{1+|x|^2}\right)\nu_n(dx)\right),\label{rep3}
\end{gather}
(see Proposition 2.6 of [14]). 
Hence we obtain \eqref{pc1}. To see \eqref{pc2},
note that
\[
\int_{\rd}(|x|^2\land1)\wt\nu_n(dx)=\int_0^a ds\int_{\rd}(|f(s)x|^2\land1)
\nu_n(dx)
\]
and consider two cases 
separately: (1) there is $c>0$ such that $|f(s)|\in\{0,c\}$ for a.e.\ 
$s\in(0,a)$; (2) otherwise.  In case (1) we have
\begin{align*}
&\int_{\rd}(|x|^2\land1)\wt\nu_n(dx)=\int_{|f(s)|=c}ds\int_{\rd}
(|cx|^2\land1)\nu_n(dx)\\
&\qquad\ges(c^2\land1)\int_{|f(s)|=c}ds\int_{\rd}
(|x|^2\land1)\nu_n(dx).
\end{align*}
In case (2), choosing $c>0$ such that $\int_{|f(s)|\les c}ds>0$
and $\int_{|f(s)|>c}ds>0$, we have
\begin{align*}
&\int_{\rd}(|x|^2\land1)\wt\nu_n(dx)=\int_0^a ds\int_{|f(s)x|\les1}
|f(s)x|^2 \nu_n(dx)+\int_0^a ds\int_{|f(s)x|>1} \nu_n(dx)\\
&\qquad\ges\int_{|f(s)|\les c}f(s)^2ds\int_{|x|\les 1/c}|x|^2
\nu_n(dx)+\int_{|f(s)|> c}ds\int_{|x|> 1/c}\nu_n(dx).
\end{align*}
Hence we obtain \eqref{pc2} in any case. To prove \eqref{pc4}, choose $c>0$
with $\int_{|f(s)|>c}ds>0$ and note that
\[
\int_{|x|>l}\wt\nu_n(dx)=\int_0^a ds\int_{|f(s)x|>l}\nu_n(dx)
\ges \int_{|f(s)|>c}ds \int_{|x|>l/c} \nu_n(dx).
\]
In order to obtain \eqref{pc3}, it suffices to show the boundedness
of
\[
\int_0^a f(s)ds\int_{\rd}x\left(\frac{1}{1+|f(s)x|^2}
-\frac{1}{1+|x|^2}\right)\nu_n(dx)
\]
since we have \eqref{rep3} and $\int_0^a f(s)ds\neq0$. This
boundedness is true because
\begin{align*}
&\int_0^a |f(s)|ds\int_{\rd}|x|\left|\frac{1}{1+|f(s)x|^2}
-\frac{1}{1+|x|^2}\right|\nu_n(dx)\\
&\qquad\les\int_0^a |f(s)|ds\int_{\rd}\frac{|x|(|f(s)x|^2+|x|^2)}
{(1+|f(s)x|^2)(1+|x|^2)}\nu_n(dx)\\
&\qquad\les\int_0^a ds\left(\frac{f(s)^2}{2}+|f(s)|\right)
\int_{|x|\les1}|x|^2\nu_n(dx)+\int_0^a ds\left(\frac{|f(s)|}{2}+\frac{1}{2}
\right)\int_{|x|>1}\nu_n(dx)\\
&\qquad\les\mrm{const}\int_{\rd}(|x|^2\land1)\nu_n(dx).
\end{align*}
Thus we have proved that} $\{\mu_n\}$ is precompact.
Therefore there exists a subsequence $\{\mu_{n_k}\}$ convergent
to some $\mu\in M$. It follows from \eqref{cont} that
$\Ph_f(\mu_{n_k})\to\Ph_f(\mu)$. Hence $\wt\mu=\Ph_f(\mu)$, concluding
$\wt\mu\in \Ph_f(M)$. Therefore $\Ph_f(M)$ is closed under weak 
convergence, which completes the proof of Proposition \ref{p3.2}.
\qed

\vskip 3mm

\begin{rem}\label{r3.3}
(1) Note that Proposition \ref{p3.2} can be applied to $\Up$- and 
$\mathcal G$-mappings, because in those mappings
the upper limit of the stochastic integral is finite, $f$ is square integrable,
and $\int_0^a f(s)ds\neq0$.\\
(2) Proposition \ref{p3.2} (4) is not necessarily true when $a=\infty$.
Namely, there is a mapping $\Phi_f $  defined by 
$\Phi_f  (\mu) = \law\left (\sek f(s)dX_s^{(\mu)}\right )$ such that 
$\Phi_f  (M\cap\mfr D(\Phi_f))$ is not closed under weak convergence
for some $M$ which is c.c.s.s. 
Indeed, let $\Ph_f=\Ps_{\al}$ with $0<\al<1$, which is defined similarly
to Example \ref{e1} (3).  Looking at Theorem 4.2 of [14], let
$\mu_n$ be such that 
\[
C_{\mu_n}(z)=\int_S \ld(d\xi)\int_0^{\infty}(e^{i\langle z,r\xi\rangle} -1)
r^{-\al-1} e^{-r/n} dr,
\]
where $\ld$ is a finite nonzero measure on $S$. Then $\mu_n\in
\Phi_f  (\mfr D(\Phi_f))$ and $\mu_n$ tends to an $\al$-stable 
distribution $\mu$ as $n\to\infty$, 
but $\mu\not\in\Phi_f  (\mfr D(\Phi_f))$ again by 
Theorem 4.2 of [14].
Thus $\Ph_f(\mfr D(\Ph_f))$ 
is not closed under weak convergence. \\
(3) However, it is known that when $\Phi_f  = \Phi$, Proposition \ref{p3.2} 
(3) and (4) are true
with $\Ph_f(M)$ replaced by $\Ph(M\cap\mfr D(\Ph))$, even if $a=\infty$.
See Lemma 4.1 of [3].
In particular, $L_m(\rd)$ is c.c.s.s.\ for $m=0,1,\ldots$.
\end{rem}

We are now going to prove the following.

\begin{thm}\label{t2}
Let $0<t_0\les\infty$.  Let $p(u)$ be a positive decreasing function on 
$(0,t_0)$ such that $\int_0^{t_0}(1+u^2)p(u)du<\infty$.
Define $g(t)=\int_t^{t_0} p(u)du$ for $0<t<t_0$ and $s_0=g(0+)<\infty$.
Let $t=f(s)$, $0<s<s_0$, be the inverse function of $s=g(t)$, $0<t<t_0$.
Define 
\[
\Ph_f(\mu)=\law\left( \int_0^{s_0} f(s)dX_s^{(\mu)}\right)\qquad\text{for 
$\mu\in\mfr{D}(\Ph_f)$}.
\]
Then,\\
{\rm(1)}\qquad $\mfr{D}(\Ph_f)=I(\R^d)$.\\
{\rm(2)}\qquad $I(\R^d)\supset \Ph_f(I(\R^d))\supset \Ph_f^2(I(\R^d))\supset
\cdots$.\\
{\rm(3)}\qquad $\Ph_f^{m}(I(\R^d))\subset U_{m-1}(\R^d)$ for $m=1,2,\ldots$.\\
{\rm(4)}\qquad $\Ph_f(S_{\al}(\R^d))=S_{\al}(\R^d)$ for\/ $0<\al\les2$.\\
{\rm(5)}\qquad $\bigcap_{m=1}^{\infty}\Ph_f^m (I(\R^d))=U_{\infty}(\R^d)$.
\end{thm}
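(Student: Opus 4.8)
The plan is to read (1) and (2) directly off Proposition \ref{p3.2}, to locate the genuine content in (3) and (4), and then to deduce (5) by combining these with the closure properties of the classes $\Ph_f^m(I(\rd))$. For (1) and (2), note that $\Ph_f$ is of the type treated in Proposition \ref{p3.2} with $a=s_0<\infty$: the substitution $s=g(t)$, $ds=-p(t)\,dt$, converts the two relevant integrals into
\[
\int_0^{s_0}f(s)^2\,ds=\int_0^{t_0}t^2 p(t)\,dt,\qquad \int_0^{s_0}f(s)\,ds=\int_0^{t_0}t\,p(t)\,dt,
\]
both finite by the hypothesis $\int_0^{t_0}(1+u^2)p(u)du<\infty$ (using $t\les\frac12(1+t^2)$ for the second), and the second strictly positive since $p>0$. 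Thus $f$ is square integrable on $(0,s_0)$ with nonzero integral, so Proposition \ref{p3.2}(1) gives $\mfr D(\Ph_f)=I(\rd)$, which is (1). In particular $\Ph_f(I(\rd))\subset I(\rd)$, and since $\Ph_f$ preserves inclusions, (2) follows by the induction $\Ph_f^{m+1}(I(\rd))=\Ph_f(\Ph_f^{m}(I(\rd)))\subset\Ph_f(\Ph_f^{m-1}(I(\rd)))=\Ph_f^{m}(I(\rd))$.

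\emph{Part (3)} is the technical core. Writing $(\ld(d\xi),\nu_\xi(dr))$ for a polar decomposition of the L\'evy measure of $\mu$, the representation of the L\'evy measure of $\Ph_f(\mu)$ (cf.\ \eqref{rep2}) shows that $\Ph_f(\mu)$ has the same spherical part $\ld$ and radial tail
\[
\wt\nu_\xi((\rho,\infty))=\int_0^{s_0}\nu_\xi((\rho/f(s),\infty))\,ds=\int_0^{t_0}\nu_\xi((\rho/t,\infty))\,p(t)\,dt,
\]
the last equality again by $s=g(t)$. Interchanging the $t$-integration with $\nu_\xi$ (Tonelli) replaces the inner integral by $g(\rho/r)=\int_{\rho/r}^{t_0}p$, so $\wt\nu_\xi((\rho,\infty))=\int_{(\rho/t_0,\infty)}g(\rho/r)\,\nu_\xi(dr)$; differentiating in $\rho$, where the boundary term vanishes because $g(t_0-)=0$, yields the radial density
\[
\wt\ell_\xi(\rho)=\int_{(\rho/t_0,\infty)}\frac{p(\rho/r)}{r}\,\nu_\xi(dr).
\]
As $\rho$ increases the region $(\rho/t_0,\infty)$ does not grow and, $p$ being decreasing, $p(\rho/r)$ does not increase, so $\wt\ell_\xi(\rho)$ is decreasing in $\rho$ and measurable in $\xi$; hence $\Ph_f(\mu)\in U(\rd)=\mcal U(I(\rd))$, which is the case $m=1$. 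For general $m$ I would use commutation: Proposition \ref{p3.2}(2) applied to both $\Ph_f$ and $\mcal U$ gives, by Fubini, $C_{\mcal U\Ph_f(\mu)}(z)=\int_0^1\!\!\int_0^{s_0}C_\mu(u f(s)z)\,ds\,du=C_{\Ph_f\mcal U(\mu)}(z)$, so $\Ph_f\mcal U=\mcal U\Ph_f$ on $I(\rd)$, hence $\Ph_f\mcal U^m=\mcal U^m\Ph_f$. Assuming inductively $\Ph_f^{m}(I(\rd))\subset U_{m-1}(\rd)=\mcal U^{m}(I(\rd))$, monotonicity and commutation give
\[
\Ph_f^{m+1}(I(\rd))\subset\Ph_f(\mcal U^{m}(I(\rd)))=\mcal U^{m}(\Ph_f(I(\rd)))\subset\mcal U^{m}(\mcal U(I(\rd)))=U_m(\rd),
\]
completing (3).

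\emph{Part (4)} I would handle through the scaling characterisation: $\mu\in S_\al(\rd)$ iff for each $c>0$ there is $\bt_c\in\rd$ with $C_\mu(cz)=c^\al C_\mu(z)+i\langle\bt_c,z\rangle$. Replacing $z$ by $cz$ in $C_{\Ph_f(\mu)}(z)=\int_0^{s_0}C_\mu(f(s)z)ds$ and extracting the constant $\int_0^{s_0}f(s)ds$ from the linear term shows that $\Ph_f(\mu)$ satisfies the same relation, so $\Ph_f(S_\al(\rd))\subset S_\al(\rd)$. Surjectivity follows from the explicit effect of $\Ph_f$ on the stable triplet: on an $\al$-stable radial density $c_\xi r^{-\al-1}$ with $0<\al<2$ the formula for $\wt\ell_\xi$ yields $\bigl(\int_0^{t_0}t^\al p(t)dt\bigr)c_\xi\rho^{-\al-1}$, and on a Gaussian it scales the covariance by $\int_0^{s_0}f(s)^2ds$; both factors are finite (using $t^\al\les1+t^2$) and strictly positive, while the drift is free, so every $\al$-stable law is attained.

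\emph{Part (5)} then follows. The inclusion $\subset$ is immediate from (3), since $\bigcap_{m}\Ph_f^{m}(I(\rd))\subset\bigcap_m U_{m-1}(\rd)=U_\infty(\rd)$. For $\supset$, observe that $I(\rd)$ is c.c.s.s., so by Proposition \ref{p3.2}(4) each $\Ph_f^{m}(I(\rd))$ is c.c.s.s.\ (Definition \ref{d3.1}); in particular it is closed under convolution and weak convergence. By (4), $S_\al(\rd)=\Ph_f^{m}(S_\al(\rd))\subset\Ph_f^{m}(I(\rd))$ for every $\al$ and every $m$, hence $S(\rd)\subset\Ph_f^{m}(I(\rd))$, and taking the closure under convolution and weak limits gives $\overline{S(\rd)}\subset\Ph_f^{m}(I(\rd))$ for all $m$. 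Since $U_\infty(\rd)=\overline{S(\rd)}$ by Propositions \ref{p2.8} and \ref{p2.10}, the two inclusions yield (5). The step I expect to be the main obstacle is (3): rigorously justifying the differentiation producing $\wt\ell_\xi$ for a general, possibly non-absolutely-continuous, radial measure $\nu_\xi$ together with the Tonelli/Fubini interchanges, and then organising the induction through the commutation $\Ph_f\mcal U=\mcal U\Ph_f$; everything else is essentially bookkeeping on top of Proposition \ref{p3.2}.
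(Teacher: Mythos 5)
Your proposal is correct and follows essentially the same route as the paper: (1)--(2) read off Proposition \ref{p3.2} after the substitution $s=g(t)$, (3) via the polar-decomposition density $\wt\ell_\xi(u)=\int_{(u/t_0,\infty)}p(u/r)r^{-1}\nu_\xi(dr)$ (decreasing since $p$ is) plus the commutation $\Ph_f\mcal U=\mcal U\Ph_f$ for the induction, (4) via the scaling characterization of $S_\al(\rd)$, and (5) by combining (3), (4), the c.c.s.s.\ property of $\Ph_f^m(I(\rd))$ and Propositions \ref{p2.8} and \ref{p2.10}. The only cosmetic difference is in (3), where the paper sidesteps the differentiation step you flag as delicate by a second Fubini interchange (substituting $u=rt$ inside $g(\rho/r)$), which exhibits the tail directly as $\int_\rho^\infty\wt\ell_\xi(u)\,du$.
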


\begin{ex}\label{e1}  The following are examples of $\Ph_f$ in 
Theorem \ref{t2}.\\
(1) $\Ph_f=\Up$ if $p(u)=e^{-u}$, $g(t)=e^{-t}$ with
$t_0=\infty$, and $f(s)=\log(s^{-1})$ with $s_0=1$.\\
(2) $\Ph_f=\mcal G$ if $p(u)=e^{-u^2}$ with $t_0=\infty$ and $s_0=\sqrt{\pi}/2$.\\
(3) $\Ph_f=\Ps_{\al}$ with $-1\les\al<0$ in [14]
if $p(u)=u^{-\al-1} e^{-u}$ with $t_0=\infty$ and $s_0=\Gm(-\al)$. Note that
$\Ps_{-1}=\Up$.\\
(4) $\Ph_f=\Ph_{\bt,\al}$ with $-1\les\al<0$ and $\bt\les\al-1$ in [14]
if 
$$p(u)=(\Gm(\al-\bt))^{-1}(1-u)^{\al-\bt-1}u^{-\al-1}$$
 with $t_0=1$ and $s_0=
\Gm(-\al)/\Gm(-\bt)$.\\
(5) $\Ph_f$ with $f(s)=(1+\al s)^{1/(-\al)}$, $-1\les\al<0$, and $s_0=1/(-\al)$.
This is a special case of (4) with $\bt=\al-1$ and $g(t)=(1-t^{-\al})/(-\al)$.\\
(6) $\Ph_f$ with $f(s)=1-(\Gm(-\bt)s)^{1/(-\bt-1)}$, $\bt\les -2$, and
$s_0=1/\Gm(-\bt)$.  This is another special case of (4) with $\al=-1$ and
$g(t)=(1-t)^{-\bt-1}/\Gm(-\bt)$.\\
See p.\,49 of [14] for (5) and (6). 
In particular, $\Ph_{-2,-1}=\mcal U$,
because in this case 
$p(u)=1$, $g(t)=1-t$, $f(s)=1-s$, and $\int_0^1C_{\mu}((1-s)z)ds= 
\int_0^1C_{\mu}(sz)ds$.
\end{ex}

\vskip 3mm
In order to prove Theorem \ref{t2}, we need two lemmas.

\begin{lem}\label{l2}
For $j=0,1$ let $0<s_j<\infty$ and $f_j(s)$ be a square integrable function on
$(0,s_j)$. Let
\[
\Ph_{f_j}(\mu)=\law\left( \int_0^{s_j} f_j(s)dX_s^{(\mu)}\right)\qquad\text{for 
$\mu\in\mfr{D}(\Ph_{f_j})=I(\R^d)$}.
\]
Then
\begin{equation}\label{1}
\Ph_{f_1}(\Ph_{f_0}(\mu))=\Ph_{f_0}(\Ph_{f_1}(\mu))\qquad\text{for 
$\mu\in I(\R^d)$}.
\end{equation}
\end{lem}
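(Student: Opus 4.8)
The plan is to verify the identity \eqref{1} at the level of cumulant functions: since an infinitely divisible distribution is determined by its cumulant function, it suffices to prove that $C_{\Ph_{f_1}(\Ph_{f_0}(\mu))}(z)=C_{\Ph_{f_0}(\Ph_{f_1}(\mu))}(z)$ for every $z\in\rd$. Because each $f_j$ is square integrable on $(0,s_j)$ with $s_j<\infty$, the representation
\[
C_{\Ph_{f_j}(\mu)}(z)=\int_0^{s_j}C_{\mu}(f_j(s)z)\,ds,\qquad z\in\rd,
\]
holds for every $\mu\in I(\rd)$; this is Proposition \ref{p3.2}(2), whose proof invokes only Proposition 2.17 of [13] and hence does not use the condition $\int_0^a f(s)ds\neq0$. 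In particular $\Ph_{f_0}$ and $\Ph_{f_1}$ map $I(\rd)$ into $I(\rd)$, so both compositions are defined on all of $I(\rd)$.

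The computation itself is short. Writing $\rho=\Ph_{f_0}(\mu)\in I(\rd)$ and applying the displayed formula twice,
\[
C_{\Ph_{f_1}(\rho)}(z)=\int_0^{s_1}C_{\rho}(f_1(s)z)\,ds
=\int_0^{s_1}\!\!\int_0^{s_0}C_{\mu}\big(f_0(u)f_1(s)z\big)\,du\,ds,
\]
while the analogous calculation with the roles of $f_0$ and $f_1$ interchanged gives
\[
C_{\Ph_{f_0}(\Ph_{f_1}(\mu))}(z)=\int_0^{s_0}\!\!\int_0^{s_1}C_{\mu}\big(f_1(s)f_0(u)z\big)\,ds\,du.
\]
Since $f_0(u)f_1(s)=f_1(s)f_0(u)$ is a scalar, the two integrands are the same function of $(u,s)$, so the lemma reduces to interchanging the order of integration, that is, to an application of Fubini's theorem.

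The one point requiring care — and the step I expect to be the main obstacle — is the absolute integrability needed for Fubini. For this I would record the elementary bound $|C_{\mu}(w)|\les K_{\mu}(1+|w|^2)$, $w\in\rd$, where $K_{\mu}$ depends only on the triplet $(A,\nu,\gm)$ of $\mu$; this is precisely the kind of estimate already assembled in the proof of Proposition \ref{p3.2}, obtained by bounding the Gaussian, drift, and L\'evy parts of $C_{\mu}$ separately and using $\int_{\rd}(|x|^2\land1)\nu(dx)<\infty$. Taking $w=f_0(u)f_1(s)z$, so that $|w|^2=f_0(u)^2f_1(s)^2|z|^2$, yields
\[
\int_0^{s_1}\!\!\int_0^{s_0}\big|C_{\mu}(f_0(u)f_1(s)z)\big|\,du\,ds
\les K_{\mu}\Big(s_0 s_1+|z|^2\int_0^{s_0}f_0(u)^2du\int_0^{s_1}f_1(s)^2ds\Big),
\]
which is finite because $s_0,s_1<\infty$ and $f_0,f_1$ are square integrable on their respective intervals. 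Hence Fubini's theorem applies, the two iterated integrals coincide, the cumulant functions of $\Ph_{f_1}(\Ph_{f_0}(\mu))$ and $\Ph_{f_0}(\Ph_{f_1}(\mu))$ agree for every $z\in\rd$, and \eqref{1} follows.
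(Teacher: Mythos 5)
Your proof is correct and follows essentially the same route as the paper's: write both compositions as iterated integrals of $C_\mu$ via Proposition \ref{p3.2}(2), bound $|C_\mu(f_0(u)f_1(s)z)|$ by a quantity that is integrable over $(0,s_0)\times(0,s_1)$ thanks to square integrability and finiteness of $s_0,s_1$, and conclude by Fubini. Your observation that the stochastic-integral representation of the cumulant does not require $\int_0^{s_j}f_j(s)\,ds\neq0$ is also consistent with how the paper uses it here.
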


\begin{proof}
We can check that
\begin{equation}\label{2}
\int_0^{s_1} du \int_0^{s_0} |C_{\mu}(f_0(s)f_1(u)z)|ds<\infty\qquad\text{for 
$z\in\R^d$},
\end{equation}
because, as in the proof of Proposition \ref{p3.2} (4),
\[
|C_{\mu}(f_0(s)f_1(u)z)|\les c_z((f_0(s)f_1(u))^2+|f_0(s)f_1(u)|+1),
\]
where $c_z$ is a constant depending only on $z$.
By virtue of \eqref{2}, we can apply Fubini's theorem and
{\allowdisplaybreaks
\begin{align*}
&C_{\Ph_{f_1}(\Ph_{f_0}(\mu))}(z)=\int_0^{s_1}C_{\Ph_{f_0}(\mu)}(f_1(u)z)du
=\int_0^{s_1}du\int_0^{s_0}C_\mu (f_0(s)f_1(u)z)ds\\
&\quad=\int_0^{s_0}ds\int_0^{s_1}C_{\mu}(f_1(u)f_0(s)z)du
=\int_0^{s_0}C_{\Ph_{f_1}(\mu)}(f_0(s)z)ds=C_{\Ph_{f_0}(\Ph_{f_1}(\mu))}(z),
\end{align*}
that is}, \eqref{1} holds.
\end{proof}

\begin{lem}\label{l3}
Let $0<s_0<\infty$. Let $f(s)$ be a nonnegative, square integrable function on 
$(0,s_0)$ such that $\int_0^{s_0} f(s) ds>0$. Then
\begin{equation*}\label{3}
\Ph_f(S_{\al}(\R^d))=S_{\al}(\R^d)\qquad\text{for $0<\al\les2$}.
\end{equation*}
\end{lem}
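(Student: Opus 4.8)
The plan is to prove the two inclusions $\Ph_f(S_{\al}(\rd))\subset S_{\al}(\rd)$ and $S_{\al}(\rd)\subset\Ph_f(S_{\al}(\rd))$ separately, the whole argument resting on Proposition \ref{p3.2}, which gives $\mfr D(\Ph_f)=I(\rd)$ together with the absolutely convergent formula $C_{\Ph_f(\mu)}(z)=\int_0^{s_0}C_{\mu}(f(s)z)\,ds$. Writing $c_{\bt}=\int_0^{s_0}f(s)^{\bt}\,ds$, I would first record that $c_{\al}<\infty$: since $(0,s_0)$ is finite and $f\in L^2$, the elementary bound $f(s)^{\al}\les f(s)^2+1$ (valid for $0<\al\les2$, $f\ges0$) makes $f^{\al}$ integrable, while $\int_0^{s_0}f\,ds>0$ forces both $c_1>0$ and $c_{\al}>0$. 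The basic tool is the scaling characterization of stability: $\rh\in S_{\al}(\rd)$ if and only if $\rh\in I(\rd)$ and for every $c>0$ there is $b(c)\in\rd$ with $C_{\rh}(cz)=c^{\al}C_{\rh}(z)+\mathrm i\langle b(c),z\rangle$ for all $z$.

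For the first inclusion, let $\mu\in S_{\al}(\rd)$ with scaling vectors $b(c)$. Using the absolute convergence from Proposition \ref{p3.2} to rearrange, I would compute for $c>0$
$$C_{\Ph_f(\mu)}(cz)=\int_0^{s_0}C_{\mu}(cf(s)z)\,ds=\int_0^{s_0}\bigl(c^{\al}C_{\mu}(f(s)z)+\mathrm i\langle b(c),f(s)z\rangle\bigr)\,ds=c^{\al}C_{\Ph_f(\mu)}(z)+\mathrm i\langle c_1 b(c),z\rangle.$$
Thus $\Ph_f(\mu)$ satisfies the scaling characterization with scaling vectors $c_1b(c)$, and since $\Ph_f(\mu)\in I(\rd)$ it lies in $S_{\al}(\rd)$. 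This gives $\Ph_f(S_{\al}(\rd))\subset S_{\al}(\rd)$, uniformly in $\al$.

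For surjectivity I would exploit two covariances that are immediate from the same formula: $\Ph_f$ commutes with dilations, $\Ph_f(T_c\mu)=T_c\Ph_f(\mu)$ where $C_{T_c\mu}(z)=C_{\mu}(cz)$, and it shifts drifts by the factor $c_1$, $\Ph_f(\mu*\delta_b)=\Ph_f(\mu)*\delta_{c_1 b}$. For a strictly $\al$-stable $\rh_0$ (so $C_{\rh_0}(cz)=c^{\al}C_{\rh_0}(z)$) the drift terms drop out and $\Ph_f(\rh_0)=T_{c_{\al}^{1/\al}}\rh_0$; hence $\Ph_f\bigl(T_{c_{\al}^{-1/\al}}\rh_0\bigr)=\rh_0$ with the preimage again strictly $\al$-stable. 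When $\al\ne1$ every $\rh\in S_{\al}(\rd)$ has the form $\rh_0*\delta_{\gm_0}$ with $\rh_0$ strictly stable, so $\mu=\bigl(T_{c_{\al}^{-1/\al}}\rh_0\bigr)*\delta_{\gm_0/c_1}$ satisfies $\Ph_f(\mu)=\rh$, and $\mu\in S_{\al}(\rd)$; this uses $c_1>0$ to invert the drift shift.

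The main obstacle is the case $\al=1$, where the translation reduction fails: a $1$-stable law need not be a translate of a strictly $1$-stable one (precisely when $\eta:=\int_S\xi\,\ld(d\xi)\ne0$ for its spherical measure $\ld$), owing to the logarithmic term in its characteristic function. Here I would argue directly from the explicit Lévy--Khintchine form of $1$-stable laws. Writing $C_{\mu}$ through a spherical measure $\ld_{\mu}$ and a drift $\gm_{\mu}$, the scaling identity takes the corrected form $C_{\mu}(cz)=cC_{\mu}(z)+\mathrm i(c\log c)\langle\eta_{\mu},z\rangle$ with $\eta_{\mu}=\int_S\xi\,\ld_{\mu}(d\xi)$; integrating against $f$ shows that $\Ph_f(\mu)$ has spherical measure $c_1\ld_{\mu}$ and drift shifted by a multiple of $\bigl(\int_0^{s_0}f(s)\log f(s)\,ds\bigr)\eta_{\mu}$, where $\int_0^{s_0}f\log f\,ds$ is finite since $|f\log f|\les f^2+1$ on $(0,s_0)$. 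Given a target $\rh\in S_1(\rd)$ with spherical measure $\ld_{\rh}$ and drift $\gm_{\rh}$, I would set $\ld_{\mu}=c_1^{-1}\ld_{\rh}$ (which fixes $\eta_{\mu}=c_1^{-1}\eta_{\rh}$) and then solve the resulting linear equation for $\gm_{\mu}$, producing a preimage $\mu\in S_1(\rd)$. Combining the two inclusions yields $\Ph_f(S_{\al}(\rd))=S_{\al}(\rd)$ for all $0<\al\les2$.
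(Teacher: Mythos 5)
Your proposal is correct, and the forward inclusion $\Ph_f(S_{\al}(\rd))\subset S_{\al}(\rd)$ is argued exactly as in the paper, via the scaling characterization and the formula $C_{\Ph_f(\mu)}(z)=\int_0^{s_0}C_{\mu}(f(s)z)\,ds$. Where you diverge is the surjectivity half. The paper treats all $\al$ uniformly: it writes $C_{\Ph_f(\mu)}(z)=\bigl(\int_0^{s_0}f(s)^{\al}ds\bigr)C_{\mu}(z)+\mathrm i\bigl\langle\int_0^{s_0}\gm_{f(s)}ds,z\bigr\rangle$, invokes the explicit form of the scaling vectors from E\,18.6 of [11] ($\gm_c=(c-c^{\al})\ta$ for $\al\neq1$, $\gm_c=-c(\log c)\ta$ for $\al=1$) to see that $\int_0^{s_0}|\gm_{f(s)}|ds<\infty$, and then exhibits the preimage of a target $\wt\mu$ in one stroke as the law with cumulant $\bigl(\int_0^{s_0}f(s)^{\al}ds\bigr)^{-1}C_{\wt\mu}(z)+\mathrm i\langle\gm,z\rangle$ for a suitable drift $\gm$ --- i.e.\ a convolution power of $\wt\mu$ composed with a translation --- and verifies $\Ph_f(\mu)=\wt\mu$ by direct computation. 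You instead split off $\al=1$: for $\al\neq1$ you use the strictly-stable-plus-translate decomposition together with the covariances $\Ph_f\circ T_c=T_c\circ\Ph_f$ and $\Ph_f(\mu*\delta_b)=\Ph_f(\mu)*\delta_{c_1b}$, which makes the action of $\Ph_f$ on $S_{\al}$ geometrically transparent (a dilation by $c_{\al}^{1/\al}$ on the strictly stable part, multiplication by $c_1$ on the drift); for $\al=1$ you work with the L\'evy--Khintchine representation and solve a linear equation for the drift, using the finiteness of $\int_0^{s_0}f\log f\,ds$. Both routes rest on the same integrability observations and produce essentially the same preimage; the paper's is shorter and case-free because it borrows the catalogued formula for $\gm_c$, while yours is more self-contained and makes explicit why $\al=1$ is the only delicate case.
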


\begin{proof}
A distribution $\mu$ is in $S_{\al}(\R^d)$ if and only if $\mu\in I(\rd)$
and for any $c>0$ there is $\gm_c\in\R^d$ such that
\[
\wh\mu(cz)=\wh\mu(z)^{c^{\al}} e^{i\langle\gm_c,z\rangle},\qquad z\in\R^d,
\]
that is,
\[
C_{\mu}(cz)=c^{\al}C_{\mu}(z)+i\langle\gm_c,z\rangle.
\]
For $c=0$ this is trivially true with $\gm_0=0$. 
If $\mu\in S_{\al}(\rd)$, then
{\allowdisplaybreaks
\begin{align*}
&C_{\Ph_f(\mu)}(cz)=\int_0^{s_0}C_{\mu}(f(s)cz)ds=
\int_0^{s_0}c^{\al}C_{\mu}(f(s)z)ds+\int_0^{s_0}i\langle\gm_c,f(s)z\rangle ds\\
&\qquad=c^{\al}C_{\Ph_f(\mu)}(z)+i\int_0^{s_0}f(s)ds\langle\gm_c,z\rangle,
\end{align*}
which shows that} $\Ph_f(\mu)\in S_{\al}(\rd)$. Further, if $\mu\in S_{\al}(\rd)$, then
\begin{equation*}
C_{\Ph_f(\mu)}(z)=\int_0^{s_0}C_{\mu}(f(s)z)ds=\left(\int_0^{s_0}f(s)^{\al}ds
\right)C_{\mu}(z)+i\left\langle\int_0^{s_0} \gm_{f(s)}ds,z\right\rangle.
\end{equation*}
Recall (E\,18.6 of [11]) that, if $\mu\in S_{\al}(\rd)$, then there is
$\ta\in\R^d$ such that 
\begin{equation*}\label{5}
\gm_c=\begin{cases}(c-c^{\al})\ta\qquad& \text{for $\al\neq1$,}\\
-c(\log c )\ta\qquad& \text{for $\al=1$,} \end{cases}
\end{equation*}
which shows that $\int_0^{s_0}|\gm_{f(s)}|ds<\infty$.

Conversely, suppose that $\wt\mu\in S_{\al}(\rd)$ with $\wt\gm_c$ in place of $\gm_c$.
Choose
\[
\gm=-\left(\int_0^{s_0} f(s)ds\right)^{-1} \left(\int_0^{s_0}f(s)^{\al}ds\right)^{-1}
\int_0^{s_0} \wt\gm_{f(s)} ds.
\]
Let $\mu\in I(\R^d)$ be such that
\[
C_{\mu}(z)=\left(\int_0^{s_0}f(s)^{\al}ds\right)^{-1}C_{\wt\mu}(z)+i\langle
\gm,z\rangle.
\]
Then $\mu\in S_{\al}(\rd)$ and 
{\allowdisplaybreaks
\begin{align*}
C_{\Ph_f (\mu)}(z)&=\int_0^{s_0} C_{\mu}(f(s)z)ds\\
&=\left(\int_0^{s_0}f(s)^{\al}ds\right)^{-1}\int_0^{s_0} C_{\wt\mu}(f(s)z)ds
+i\int_0^{s_0}\langle\gm, f(s)z\rangle ds\\
&=C_{\wt\mu}(z)+i\left(\int_0^{s_0}f(s)^{\al}ds\right)^{-1}
\int_0^{s_0}\langle\wt\gm_{f(s)},z\rangle ds+i\int_0^{s_0}f(s)ds \langle
 \gm,z\rangle\\
&=C_{\wt\mu}(z),
\end{align*}
and hence} $\Ph_f (\mu)=\wt\mu$.
This completes the proof.
\end{proof}

\begin{proof}[Proof of Theorem \ref{t2}] 
In the following, we write $I$ for $I(\rd)$ for simplicity.

(1) Since $s_0<\infty$ and since
\[
\int_0^{s_0} f(s)^2ds=\int_0^{t_0} t^2 p(t)dt<\infty,
\]
we have $\mfr{D}(\Ph_f)=I$. See Proposition \ref{p3.2} (1).

(2) It follows from $\Ph_f(I)\subset I$ that
$\Ph_f^2(I)\subset \Ph_f(I)$. Then, $\Ph_f^3(I)\subset \Ph_f^2(I)$,
and so on.

(3) Let $\wt\mu=\Ph_f(\mu)$. Let $\wt\nu$ and $\nu$ be the L\'evy 
measures of $\wt\mu$ and $\mu$, respectively. Let $(\ld(d\xi),\nu_{\xi}(dr))$
be a polar decomposition of $\nu$. We know
\[
\wt\nu(B)=\int_0^{s_0} ds\int_{\R^d}1_B(f(s)x)\nu(dx)
=\int_0^{t_0}p(t)dt\int_{\R^d}1_B(tx)\nu(dx)
\]
for $B\in\mcal B(\R^d)$. If $B=\{r\xi\colon \xi\in D,\;r\in(s,\infty)\}$ 
with $D\in\mcal B(S)$ and $s>0$, then
{\allowdisplaybreaks
\begin{align*}
\wt\nu(B)&=\int_0^{t_0}p(t)dt\int_D\ld(d\xi)\int_{s/t}^{\infty}\nu_{\xi}(dr)
=\int_D\ld(d\xi)\int_{s/t_0}^{\infty}\nu_{\xi}(dr)\int_{s/r}^{t_0} p(t)dt\\
&=\int_D\ld(d\xi)\int_{s/t_0}^{\infty}r^{-1}\nu_{\xi}(dr)\int_s^{rt_0} p(u/r)du\\
&=\int_D\ld(d\xi)\int_s^{\infty}du\int_{u/t_0}^{\infty}p(u/r)r^{-1}\nu_{\xi}(dr).
\end{align*}
Hence, letting $\wt\ld=\ld$ and
\begin{equation*}
\wt l_{\xi}(u)= 
\int_{u/t_0}^{\infty}p(u/r)r^{-1}\nu_{\xi}(dr),
\end{equation*}
we obtain a polar decomposition} $(\wt\ld(d\xi), \wt l_{\xi}(u)du)$ of $\wt\nu$.
Since $p$ is decreasing, $\wt l_{\xi}(u)$ is decreasing in $u$. Therefore
$\wt\mu\in U(\rd)=U_0(\rd)$. Hence $\Ph_f(I)\subset U_0(\rd)=\mcal U(I)$.
 This proves (3) for $m=1$. 

If $\Ph_f^{m}(I)\subset \mcal U^{m}(I)=U_{m-1}(\rd)$, then, using Lemma \ref{l2},
we get
\[
\Ph_f^{m+1}(I)\subset \Ph_f(\mcal U^{m}(I))=\mcal U^{m}(\Ph_f(I))\subset 
\mcal U^{m+1}(I)=U_{m}(\rd).
\]
This completes the induction argument.

(4) Apply Lemma \ref{l3}.

(5) It follows from (3) that
\[
\bigcap_{m=1}^{\infty} \Ph_f^m (I)\subset\bigcap_{m=0}^{\infty} U_m(\rd)
=U_{\infty}(\rd).
\]
On the other hand, it follows from (4) that
\[
S_{\al}(\rd)=\Ph_f^m(S_{\al}(\rd))\subset\Ph_f^m(I).
\]
Hence $S(\rd)=\bigcup_{0<\al\les2}S_{\al}(\rd)\subset\bigcap_{m=1}^{\infty} \Ph_f^m (I)$.
Use Proposition \ref{p3.2} (4) to show that $\Ph_f^m (I)$ is c.c.s.s.
Then we see that $\overline{S(\rd)}\subset\bigcap_{m=1}^{\infty} \Ph_f^m (I)$. Since
$\overline{S(\rd)}=L_{\infty}(\rd)$ (Proposition \ref{p2.10}) and $L_{\infty}(\rd)
=U_{\infty}(\rd)$ 
(Proposition \ref{p2.8}), the proof of (5) is complete.
\end{proof}

We need one more lemma.

\begin{lem}\label{l3.8}
$T_m(\rd)$ is c.c.s.s.\ for $m=0,1,\ldots$\;.
\end{lem}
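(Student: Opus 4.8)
The plan is to factor the iterated map and reduce the c.c.s.s.\ property of $T_m(\rd)$ to that of $L_m(\rd)$, which is already available from Remark \ref{r3.3} (3). The key structural fact is the commutation $\Ps=\Up\Ph=\Ph\Up$. Granting the iterated version $\Ps^{m+1}=\Up^{m+1}\Ph^{m+1}$ (see below), one gets
$$
T_m(\rd)=\Ps^{m+1}(I_{\log^{m+1}}(\rd))=\Up^{m+1}\bigl(\Ph^{m+1}(I_{\log^{m+1}}(\rd))\bigr)=\Up^{m+1}(L_m(\rd)),
$$
since $\Ph^{m+1}(I_{\log^{m+1}}(\rd))=L_m(\rd)$ by definition. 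Now $\Up$ is of the form $\Ph_f$ with finite upper limit $s_0=1$ (Example \ref{e1} (1)), so Proposition \ref{p3.2} applies to it (Remark \ref{r3.3} (1)); in particular, by Proposition \ref{p3.2} (4), $\Up$ sends c.c.s.s.\ classes to c.c.s.s.\ classes. Starting from $L_m(\rd)$, which is c.c.s.s., and applying this $m+1$ times shows that each $\Up^{j}(L_m(\rd))$ is c.c.s.s.\ and hence that $\Up^{m+1}(L_m(\rd))=T_m(\rd)$ is c.c.s.s.

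Before invoking the factorization I would pin down the domains, which also discharges the claim, announced earlier, that $\mfr D(\Ph^{m+1})=\mfr D(\Ps^{m+1})=I_{\log^{m+1}}(\rd)$. The mechanism is that $\Ph$ raises the order of the logarithmic moment by one while $\Up$ preserves it. From the L\'evy-measure representation $\nu_{\Ph(\mu)}(B)=\int_0^{\infty}ds\int_{\rd}1_B(e^{-s}x)\nu(dx)$ (the case $f(s)=e^{-s}$, $a=\infty$ of formula \eqref{rep2}), a direct computation gives $\int_{\rd}(\log^+|y|)^k\,\nu_{\Ph(\mu)}(dy)=(k+1)^{-1}\int_{|x|>1}(\log|x|)^{k+1}\nu(dx)$, so $\Ph(\mu)\in I_{\log^k}(\rd)$ iff $\mu\in I_{\log^{k+1}}(\rd)$; the case $k=0$ recovers $\mfr D(\Ph)=I_{\log}(\rd)$. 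The companion representation $\nu_{\Up(\mu)}(B)=\int_0^{\infty}e^{-t}dt\int_{\rd}1_B(tx)\nu(dx)$, together with $\int_0^{\infty}e^{-t}(\log^+t)^k\,dt<\infty$, shows $\Up(\mu)\in I_{\log^k}(\rd)$ iff $\mu\in I_{\log^k}(\rd)$. Since $\mfr D(\Up)=I(\rd)$, an induction on $m$ (using $\mfr D(\Ph^{m+1})=\{\mu\in\mfr D(\Ph):\Ph(\mu)\in\mfr D(\Ph^m)\}$, and similarly for $\Ps$) then yields both domain identities.

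The step I expect to require the most care is the passage from $\Ps=\Up\Ph=\Ph\Up$ to the iterated factorization $\Ps^{m+1}=\Up^{m+1}\Ph^{m+1}$, since one must verify that every intermediate composition is defined on the correct class. I would first prove $\Ph\Up^{n}=\Up^{n}\Ph$ by induction on $n$, each swap being an instance of the given relation $\Ph\Up=\Up\Ph$ (whose equality of domains is part of what is cited), and then deduce $(\Up\Ph)^{m+1}=\Up^{m+1}\Ph^{m+1}$ by moving each $\Ph$ to the right past the block of $\Up$'s, exactly as the commutation of finite-$a$ maps is used in Lemma \ref{l2}. The domain identities of the previous paragraph guarantee $\mfr D(\Up^{m+1}\Ph^{m+1})=\mfr D(\Ph^{m+1})=I_{\log^{m+1}}(\rd)=\mfr D(\Ps^{m+1})$, because $\Ph^{m+1}(I_{\log^{m+1}}(\rd))=L_m(\rd)\subset I(\rd)=\mfr D(\Up^{m+1})$. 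Hence the factorization is a genuine identity of maps on $I_{\log^{m+1}}(\rd)$, giving $T_m(\rd)=\Up^{m+1}(L_m(\rd))$ and completing the argument.

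I note that the only external inputs are Proposition \ref{p3.2} (4) (valid for $\Up$ by Remark \ref{r3.3} (1)) and the fact that $L_m(\rd)$ is c.c.s.s.\ (Remark \ref{r3.3} (3)); everything else is the bookkeeping of domains and the combinatorial rearrangement of commuting maps, which is routine once the two log-moment relations above are in hand.
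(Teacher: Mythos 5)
Your proof is correct and follows essentially the same route as the paper: establish that $\Up$ preserves and $\Ph$ raises by one the order of the logarithmic moment via the L\'evy-measure representations, deduce $\mfr D(\Ph^{m+1})=\mfr D(\Ps^{m+1})=I_{\log^{m+1}}(\rd)$ and the factorization $T_m(\rd)=\Up^{m+1}(L_m(\rd))$, and then apply Proposition \ref{p3.2} (4) to the c.c.s.s.\ class $L_m(\rd)$. The only difference is one of emphasis: you spell out the iterated commutation $\Ps^{m+1}=\Up^{m+1}\Ph^{m+1}$ more explicitly than the paper, while the paper carries out the converse direction of the $\Up$ log-moment equivalence (via a binomial expansion and induction on $m$) in more detail than your sketch.
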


\begin{proof}
We first show\\
(1) $\mu\in I_{\log ^m}(\rd)$ if and only if $\Up (\mu)\in I_{\log^m}(\rd)$\\
and\\
(2)  $\mu\in I_{\log ^{m+1}}(\rd)$ if and only if $\mu\in I_{\log}(\rd)$ and
$\Phi (\mu)\in I_{\log^{m}}(\rd)$.

Let us prove (1). If $\int_{|y|>1}(\log|y|)^m\nu_{\mu}(dy)<\infty$, then
{\allowdisplaybreaks
\begin{align*}
\int_{|x|>1} & (\log |x|)^m\nu_{\Up(\mu)}(dx)=
\int_{|x|>1} (\log |x|)^m \sek \nu_{\mu}(s^{-1}dx)e^{-s}ds\\
& =\int_{\rd}\nu_{\mu}(dy)\sek (\log |sy|)^m e^{-s} 1_{\{|sy|>1\}}ds\\
& =\int_{|y|>0} \nu_{\mu}(dy)\int_{1/|y|}^{\infty}(\log |sy|)^m e^{-s}ds\\
& =\int_{|y|>0}\nu_{\mu}(dy)\int_{1/|y|}^{\infty}
\sum_{n=0}^m \binom{m}{n}  (\log s)^n(\log|y|)^{m-n} e^{-s}ds\\
& =\sum_{n=0}^m \binom{m}{n} \int_{|y|>1}(\log|y|)^{m-n} \nu_{\mu}(dy)            
\int_{1/|y|}^{\infty}(\log s)^n e^{-s}ds +\text{finite term},
\end{align*}
which implies} $\int_{|x|>1}(\log|x|)^m\nu_{\Up(\mu)}(dx)<\infty$. Here we 
have used that
\[
\int_{1/|y|}^{\infty}(\log s)^n e^{-s}ds \sim (\log(1/|y|))^n e^{-1/|y|},
\qquad |y|\to0,
\]
and that $\int_0^{\infty}(\log s)^n e^{-s}ds$ is finite.
Conversely, if $m\ges1$ and 
$\int_{|x|>1}(\log|x|)^m\nu_{\Up(\mu)}(dx)<\infty$ and if (1) is true for
$m-1$ in place of $m$, then 
$\int_{|y|>1}(\log|y|)^j\nu_{\mu}(dy)<\infty$ for $j=0,\ldots,m-1$, and
the equalities above show that
$\int_{|y|>1}(\log|y|)^m\nu_{\mu}(dy)<\infty$.
As (1) is trivially true for $m=0$, we see that (1) is true for all $m$.

Assertion (2) follows from
{\allowdisplaybreaks
\begin{align*}
\int_{|x|>1} & (\log |x|)^m\nu_{\Phi(\mu)}(dx)
=\int_{|x|>1} (\log |x|)^m \sek \nu_{\mu}(e^sdx)ds\\
& =\int_{\rd}\nu_{\mu}(dy)\sek (\log |e^{-s}y|)^m 1_{\{|e^{-s}y|>1\}}ds\\
& =\int_{|y|>1}\nu_{\mu}(dy)\int_{0}^{\log|y|}(\log |y|- s)^m ds\\
& =(m+1)^{-1}\int_{|y|>1}(\log|y|)^{m+1}\nu_{\mu}(dy).
\end{align*}

It follows from (2) that} $\mfr D(\Ph^m)=I_{\log^m}(\R^d)$.  
Since $\Ps=\Ph\Up=\Up\Ph$, it follows from (1) and (2) that\\
(3) $\mu\in I_{\log ^{m+1}}(\rd)$ if and only if $\mu\in I_{\log}(\rd)$ and
$\Ps (\mu)\in I_{\log^{m}}(\rd)$.\\
Hence $\mfr D(\Ps^m)=I_{\log^m}(\R^d)$. 
Thus, we have
\begin{equation*}
T_m(\rd) = \Psi ^{m+1}(I_{\log^{m+1}}(\rd)) 
= (\Up^{m+1} \Phi^{m+1})(I_{\log^{m+1}}(\rd)),
\end{equation*}
that is,
\begin{equation}\label{3.5}
T_m(\rd) = \Up^{m+1}(L_m(\rd)).
\end{equation}
Since $L_m(\rd)$ is c.c.s.s.\ by Remark 
\ref{r3.3} (3), it follows from Proposition \ref{p3.2} (4) that
$T_m(\rd)$ is c.c.s.s.
\end{proof}

We are now ready to prove Theorem \ref{t1}.

\n
{\it Proof of Theorem \ref{t1}}. 
We have already seen that
$U_{\infty}(\rd) = L_{\infty}(\rd) = \overline{S(\rd)}$.
Since $\Up$- and $\mathcal G$-mappings are examples of $\Phi_f$ in Theorem 
\ref{t2},
it follows from Theorem \ref{t2} (5) that $B_{\infty}(\rd)=
G_{\infty}(\rd)=U_{\infty}(\rd)=\overline{S(\rd)}$.
It remains to show that $T_{\infty}(\rd)=\overline{S(\rd)}$.
It follows from \eqref{3.5}, Proposition \ref{p3.2} (3), (4), and
Remark \ref{r3.3} (3) that $T_m(\rd)=\Up^{m+1}(L_m(\rd))\subset L_m(\rd)$.
Hence
\begin{equation}\label{3.6}
T_{\infty}(\rd) \subset L_{\infty}(\rd).
\end{equation}
It follows from \eqref{3.5} and Lemma \ref{l3} that
$T_m(\rd)=\Up^{m+1}(L_m(\rd))\supset\Up^{m+1}(S(\rd))=S(\rd)$.  Hence we get
$T_m(\rd)\supset \overline{S(\rd)}$ from Lemma \ref{l3.8}. Therefore
\begin{equation}\label{3.7}
T_{\infty}(\rd) \supset \overline{S(\rd)}=L_{\infty}(\rd).
\end{equation}
Thus, \eqref{3.6} and \eqref{3.7} imply that $T_{\infty}(\rd)=L_{\infty}(\rd)$,
which completes the proof of Theorem \ref{t1}.
\qed

\vskip 10mm

\section{Proof of Proposition \ref{p2.8}}

As we announced, we give here our proof of Proposition \ref{p2.8}.
We start with the following fact.

\begin{prop}\label{p7}
Let $\mu\in U_0(\R^d)$ with L\'evy measure $\nu^{\mu}\neq0$. 
Let $\mu=\mcal U(\rh)$ and let $(\ld^{\rh} (d\xi), \nu_{\xi}^{\rh}(dr))$ be a
polar decomposition of the L\'evy measure $\nu^{\rh}$ of $\rh$.
Let $\ld^{\mu}=\ld^{\rh}$ and let
{\allowdisplaybreaks
\begin{gather}
l_{\xi}^{\mu}(r)=\int_r^{\infty} s^{-1}\nu_{\xi}^{\rh}(ds),\qquad r>0,\label{4.1}\\
\nu_{\xi}^{\mu}(dr)=l_{\xi}^{\mu}(r)dr,\qquad r>0,\label{4.2}\\
h_{\xi}^{\rh}(u)=e^{-u}\nu_{\xi}^{\rh}((e^u,\infty)),\qquad u\in\R,\label{4.3}\\
h_{\xi}^{\mu}(u)=e^{-u}\nu_{\xi}^{\mu}((e^u,\infty)),\qquad u\in\R .\label{4.4}
\end{gather}
Then the following are true.\\
$(1)$ $(\ld^{\mu}(d\xi), \nu_{\xi}^{\mu}(dr))$ is a 
polar decomposition of $\nu^{\mu}$.\\
$(2)$  $h_{\xi}^{\mu}(u)$ is absolutely continuous on $\R$ and}
\begin{equation*}
-\frac{d}{du}h_{\xi}^{\mu}(u)=h_{\xi}^{\rh}(u) ,\qquad \text{for}\,\,\mrm{a.e.}\,u\in\R,
\end{equation*}
where $\frac{d}{du}$ denotes Radon--Nikod\'ym derivative.
\end{prop}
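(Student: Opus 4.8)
The plan is to reduce everything to an explicit computation with the Lévy measure of $\mcal U(\rh)$ and then differentiate the resulting tail integrals. First I would record that, since $\mcal U(\rh)=\law(\int_0^1 s\,dX_s^{(\rh)})$ is $\Ph_f$ with $f(s)=s$ on $(0,1)$, the representation \eqref{rep2} gives
\[
\nu^{\mu}(B)=\int_0^1 ds\int_{\rd} 1_B(sx)\,\nu^{\rh}(dx),\qquad B\in\mcal B(\rd\setminus\{0\}).
\]
To prove (1) it then suffices to evaluate this on the generating $\pi$-system of sets $B=\{r\xi\colon \xi\in D,\ r>c\}$ with $D\in\mcal B(S)$ and $c>0$, insert the polar decomposition $(\ld^{\rh},\nu_\xi^{\rh})$ of $\nu^{\rh}$, and apply Fubini (Tonelli, as all integrands are nonnegative).

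Carrying this out, $sr\xi\in B$ iff $\xi\in D$ and $r>c/s$, so
\[
\nu^{\mu}(B)=\int_D\ld^{\rh}(d\xi)\int_0^1 ds\,\nu_\xi^{\rh}((c/s,\infty))
=\int_D\ld^{\rh}(d\xi)\int_c^{\infty}(1-c/t)\,\nu_\xi^{\rh}(dt),
\]
the last step by exchanging the $s$- and $t$-integrations. A second, independent Fubini applied to the definition \eqref{4.1} of $l_\xi^{\mu}$ gives $\int_c^{\infty} l_\xi^{\mu}(r)\,dr=\int_c^{\infty}(1-c/t)\,\nu_\xi^{\rh}(dt)$. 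Matching the two shows that the candidate decomposition $(\ld^{\mu},\nu_\xi^{\mu})$ with $\ld^{\mu}=\ld^{\rh}$ and $\nu_\xi^{\mu}(dr)=l_\xi^{\mu}(r)\,dr$ produces a measure agreeing with $\nu^{\mu}$ on this generating class, hence everywhere. Since $l_\xi^{\mu}(r)$ is measurable in $\xi$ (from measurability of $\nu_\xi^{\rh}$) and positive, this is a genuine polar decomposition, proving (1).

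For (2), the same tail computation with $c=e^u$ yields $\nu_\xi^{\mu}((e^u,\infty))=\int_{e^u}^{\infty}(1-e^u/t)\,\nu_\xi^{\rh}(dt)$, which is finite and, being the composition of $u\mapsto e^u$ with the function $v\mapsto\int_v^\infty l_\xi^{\mu}(r)\,dr$, is absolutely continuous in $u$; hence so is $h_\xi^{\mu}(u)=e^{-u}\nu_\xi^{\mu}((e^u,\infty))$. Differentiating and using $\frac{d}{dv}\int_v^\infty l_\xi^{\mu}(r)\,dr=-l_\xi^{\mu}(v)$ for a.e.\ $v$ gives
\[
\frac{d}{du}h_\xi^{\mu}(u)=-h_\xi^{\mu}(u)-l_\xi^{\mu}(e^u).
\]
It then remains to simplify $h_\xi^{\mu}(u)+l_\xi^{\mu}(e^u)$: splitting $(1-e^u/t)$ and recognizing $l_\xi^{\mu}(e^u)=\int_{e^u}^\infty t^{-1}\nu_\xi^{\rh}(dt)$, the two $\int_{e^u}^\infty t^{-1}\nu_\xi^{\rh}(dt)$ terms cancel and what survives is exactly $e^{-u}\nu_\xi^{\rh}((e^u,\infty))=h_\xi^{\rh}(u)$. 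This yields $-\frac{d}{du}h_\xi^{\mu}(u)=h_\xi^{\rh}(u)$ for a.e.\ $u$.

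The computations are elementary, so the only real care is bookkeeping. I expect the main obstacle to be the absolute-continuity step in (2): one must confirm that $v\mapsto\int_v^\infty l_\xi^{\mu}(r)\,dr$ is locally absolutely continuous before the pointwise a.e.\ derivative identity can be invoked. This follows from local integrability of $l_\xi^{\mu}$ on $(0,\infty)$, which is itself a consequence of $\int_c^\infty l_\xi^{\mu}(r)\,dr=\int_c^\infty(1-c/t)\,\nu_\xi^{\rh}(dt)\les\nu_\xi^{\rh}((c,\infty))<\infty$ for every $c>0$; this same bound also guarantees the finiteness needed to justify all the interchanges of integration.
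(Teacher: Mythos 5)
Your proposal is correct and follows essentially the same route as the paper: express $\nu^{\mu}$ via the stochastic-integral representation, evaluate on the polar sets $\{r\xi:\xi\in D,\ r>c\}$, use Fubini to identify the density $l_{\xi}^{\mu}(r)=\int_r^{\infty}s^{-1}\nu_{\xi}^{\rh}(ds)$, and then differentiate the tail to get $-\frac{d}{du}h_{\xi}^{\mu}(u)=h_{\xi}^{\mu}(u)+l_{\xi}^{\mu}(e^u)=h_{\xi}^{\rh}(u)$. The only differences are cosmetic (you pass through the closed form $\int_c^{\infty}(1-c/t)\nu_{\xi}^{\rh}(dt)$ and match two Fubini computations, where the paper swaps the order of integration in a single chain), and your closing remark on local integrability of $l_{\xi}^{\mu}$ correctly supplies the justification the paper leaves implicit.
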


\n
{\it Proof}.  It follows from Definition \ref{d2.2} (1) that
\[
\nu^{\mu}(B)=\int_0^1\nu^{\rh}(t^{-1}B)dt,\qquad B\in\mcal B(\R^d\setminus\{0\}).
\]
We have $\nu^{\rh}\neq0$, since $\nu^{\mu}\neq0$.
Let $v>0$ and $D\in\mcal B(S)$. For $B=(v,\infty)D=\{x=r\xi\colon \xi\in D,\,r>v\}$,
notice that
{\allowdisplaybreaks
\begin{align*}
\nu^{\mu}(B)&=\int_0^1dt\int_D\ld^{\rh}(d\xi)\int_{t^{-1}v}^{\infty}\nu_{\xi}^{\rh}(dr)
=\int_D\ld^{\rh}(d\xi)\int_v^{\infty}\nu_{\xi}^{\rh}(dr)\int_{v/r}^1dt\\
&=\int_D\ld^{\rh}(d\xi)\int_v^{\infty}\nu_{\xi}^{\rh}(dr)\int_v^r r^{-1} du
=\int_D\ld^{\rh}(d\xi)\int_v^{\infty}du\int_u^{\infty}r^{-1}\nu_{\xi}^{\rh}(dr)\\
&=\int_S\ld^{\mu}(d\xi)\int_0^{\infty}1_{(v,\infty)D}(r\xi)l_{\xi}^{\mu}(r)dr,
\end{align*}
by \eqref{4.1}.
Thus for a general} $B\in\mcal B(\R^d\setminus\{0\})$, we have
$$
\nu^{\mu}(B)=\int_S\ld_{\xi}^{\mu}(d\xi)\int_0^{\infty}1_{B}(r\xi)
l_{\xi}^{\mu}(r)dr.
$$
Hence (1) is true. Since
\begin{equation*}
h_{\xi}^{\mu}(u)=e^{-u}\int_{e^u}^{\infty}l_{\xi}^{\mu}(s)ds,\qquad u\in\R ,
\end{equation*}
absolute continuity of $h_{\xi}^{\mu}(u)$ is obvious. We have, for a.e. $u\in\R$,
{\allowdisplaybreaks
\begin{align*}
-\frac{d}{du}h_{\xi}^{\mu}(u)
&= e^{-u}\int_{e^u}^{\infty}l_{\xi}^{\mu}(s)ds + l_{\xi}^{\mu}(e^u)\\
&=e^{-u}\int_{e^u}^{\infty}ds\int_s^{\infty} r^{-1}\nu_{\xi}^{\rh}
(dr)+\int_{e^u}^{\infty}r^{-1}\nu_{\xi}^{\rh}(dr)\\
&=e^{-u}\int_{e^u}^{\infty} r^{-1}\nu_{\xi}^{\rh}(dr)\int_{e^u}^r ds+
\int_{e^u}^{\infty}r^{-1}\nu_{\xi}^{\rh}(dr)\\
&=\int_{e^u}^{\infty}r^{-1}\nu_{\xi}^{\rh}(dr)\left ( e^{-u}(r-e^u)+1\right )\\
&=e^{-u}\nu_{\xi}^{\rh}((e^u,\infty))
= h_{\xi}^{\rh}(u).
\end{align*}}
This completes the proof of (2).
\qed

\vskip 3mm
The next two propositions give us some properties of $\mu\in U_m(\rd)$
for $m\in\{0,1,2,...,\infty\}$.

\begin{prop}\label{p9}
Let $m\in\{0,1,\ldots\}$. Suppose that $\mu\in U_m(\R^d)$ with  
L\'evy measure $\nu\neq0$.
Let $(\ld(d\xi), \nu_{\xi}(dr))$ be a polar decomposition of $\nu$.
Let 
\begin{equation}\label{4.4a}
h_{\xi}(u)=e^{-u}\nu_{\xi}((e^u,\infty)),\qquad u\in\R .
\end{equation}
Then, for $\ld$-a.e.\ $\xi$, 
$h_{\xi}(u)$ is $m$ times differentiable on $\R$ and $(d/du)^m h_{\xi}(u)$ 
is absolutely continuous on $\R$.  Moreover,
\begin{equation}\label{4.5}
\left(-\frac{d}{du}\right)^j h_{\xi}(u)\ges  0\qquad\text{for all $u\in\R$ \,\, for 
$j=0,1,\ldots,m$}
\end{equation}
and 
\begin{equation}\label{4.6}
\left(-\frac{d}{du}\right)^{m+1} h_{\xi}(u)\ges 0\qquad\text{for }\,\,
\mrm{a.e.} \,\,u\in\R.
\end{equation}
\end{prop}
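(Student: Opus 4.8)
My plan is to prove Proposition \ref{p9} by induction on $m$, using Proposition \ref{p7} as the single-step tool that passes from $\mu=\mcal U(\rh)$ to $\rh$. Before starting I would record that every assertion is invariant under the choice of polar decomposition: an equivalent choice $(c(\xi)\ld,c(\xi)^{-1}\nu_\xi)$ with $0<c(\xi)<\infty$ only multiplies $h_\xi$ by the positive constant $c(\xi)^{-1}$ and does not change the $\ld$-null sets, so it suffices to argue with any convenient decomposition and the phrase ``for $\ld$-a.e.\ $\xi$'' is meaningful.

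For the base case $m=0$ I would write $\mu=\mcal U(\rh)$ with $\rh\in I(\rd)$, note $\nu^\rh\neq0$, and read off Proposition \ref{p7} directly: $h_\xi=h_\xi^\mu$ is absolutely continuous and $-\frac{d}{du}h_\xi=h_\xi^\rh\ges0$ a.e., while $h_\xi=e^{-u}\nu_\xi((e^u,\infty))\ges0$. This is exactly \eqref{4.5} for $j=0$ together with \eqref{4.6} for $m=0$.

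For the inductive step I would write $\mu\in U_m(\rd)=\mcal U^{m+1}(I(\rd))$ as $\mu=\mcal U(\rh)$ with $\rh=\mcal U^{m}(\sigma)\in U_{m-1}(\rd)$, so that Proposition \ref{p7} gives $\nu^\rh\neq0$, $\ld^\mu=\ld^\rh$, and $-\frac{d}{du}h_\xi^\mu=h_\xi^\rh$ a.e. Applying the induction hypothesis to $\rh$ furnishes, for $\ld^\rh$-a.e.\ $\xi$, that $h_\xi^\rh$ is $(m-1)$ times differentiable with $(d/du)^{m-1}h_\xi^\rh$ absolutely continuous, $(-d/du)^jh_\xi^\rh\ges0$ everywhere for $j\les m-1$, and $(-d/du)^m h_\xi^\rh\ges0$ a.e. Because $h_\xi^\rh$ is then continuous, I would upgrade the a.e.\ Radon--Nikod\'ym identity to the classical statement $\frac{d}{du}h_\xi^\mu=-h_\xi^\rh$ at every point (an absolutely continuous function whose a.e.\ derivative is continuous is $C^1$). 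From $(-d/du)^j h_\xi^\mu=(-d/du)^{j-1}h_\xi^\rh$ for $j\ges1$ the remaining conclusions follow termwise: differentiability of order $m$ and absolute continuity of $(d/du)^m h_\xi^\mu=-(d/du)^{m-1}h_\xi^\rh$; then \eqref{4.5} for $j=1,\ldots,m$; and \eqref{4.6} for $j=m+1$. Since $\ld^\mu=\ld^\rh$, the exceptional directions form a $\ld^\mu$-null set.

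I expect the only real obstacle to be bookkeeping at the interface between a.e.\ and everywhere statements: Proposition \ref{p7} delivers $-\frac{d}{du}h_\xi^\mu=h_\xi^\rh$ only as a Radon--Nikod\'ym derivative, whereas \eqref{4.5} demands the sign conditions and the differentiability \emph{at every} $u$. Continuity of $h_\xi^\rh$, guaranteed by the induction hypothesis (even at the bottom, where $h_\xi^\rh$ is merely asserted absolutely continuous), is precisely what lets me promote the a.e.\ derivative to an honest one-step-higher classical derivative and thereby keep the ``for all $u$'' clauses intact through the induction.
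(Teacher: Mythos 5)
Your proposal is correct and follows essentially the same route as the paper: induction on $m$ with Proposition \ref{p7} as the one-step tool, handling the $c(\xi)$ normalization via the uniqueness of polar decompositions, and upgrading the a.e.\ Radon--Nikod\'ym identity $-\frac{d}{du}h_\xi^\mu=h_\xi^\rh$ to an everywhere statement by using the continuity (via absolute continuity) of $h_\xi^\rh$ supplied by the induction hypothesis. The paper's Step 2 makes exactly this continuity argument, citing $\rh\in U_m(\rd)\subset U_0(\rd)$, so no further comment is needed.
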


\n
{\it Proof}.  {\it Step 1}. The case $m=0$. We have $\mu\in U_0(\rd)=\mcal U(I(\rd))$.
Hence there is $\rh$ such that $\mu=\mcal U(\rh)$. 
We have $\nu^{\rh}\neq0$ for the L\'evy measure $\nu^{\rh}$ of $\rh$.
Let 
$(\ld^{\rh} (d\xi), \nu_{\xi}^{\rh}(dr))$ be a
polar decomposition of $\nu^{\rh}$.  Then Proposition \ref{p7} gives
a polar decomposition $(\ld^{\mu}(d\xi),\nu_{\xi}^{\mu}(dr))$ 
of the L\'evy measure  $\nu$ of $\mu$. 
On the other hand, $\nu$ has a polar decomposition 
$(\ld(d\xi), \nu_{\xi}(dr))$.
Hence it follows from Proposition \ref{p2.1} that there is a measurable function
$0<c(\xi)<\infty$ such that $\ld(d\xi)=c(\xi)\ld^{\mu}(d\xi)$ and, for 
$\ld$-a.e.\ $\xi$, $\nu_{\xi}(dr)=c(\xi)^{-1}\nu_{\xi}^{\mu}(dr)$. Hence
$h_{\xi}(u)=c(\xi)^{-1}h_{\xi}^{\mu}(u)$ for $\ld$-a.e.\ $\xi$. Thus,
Proposition \ref{p7} shows that, for $\ld$-a.e.\ $\xi$, $h_{\xi}(u)$ is 
absolutely continuous and
$(-d/du)h_{\xi}(u)=c(\xi)^{-1}h_{\xi}^{\rh}(u)\ges0$.

{\it Step 2}. Suppose that the statement is true for $m$.
Suppose that $\mu\in U_{m+1}(\rd)=\mcal U^{m+2}(I(\rd))$ with  
L\'evy measure $\nu\neq0$. Then there is $\rh\in\mcal U^{m+1}(I(\rd))$ 
such that $\mu=\mcal U(\rh)$.
The same argument as in Step 1 shows that, for $\ld$-a.e.\ $\xi$, $h_{\xi}(u)$ is 
absolutely continuous and
$(-d/du)h_{\xi}(u)=c(\xi)^{-1}h_{\xi}^{\rh}(u)$ for a.e.\ $u>0$.
Moreover $h_{\xi}(u)$ is differentiable and this equality holds
for all $u>0$, because 
$h_{\xi}^{\rh}(u)$ is continuous since $\rh\in U_m(\rd)\subset U_0(\rd)$.
Now, using the induction hypothesis, we see that $h_{\xi}^{\rh}(u)$
satisfies our assertion with $m$ replaced by $m+1$.
\qed

\vskip 3mm
\begin{prop}\label{p10}
Suppose that $\mu\in U_{\infty}(\R^d)$ with L\'evy measure $\nu\neq0$.
Let\linebreak $(\ld(d\xi), \nu_{\xi}(dr))$ be a polar decomposition of $\nu$.
Define $h_{\xi}(u)$ by \eqref{4.4a}.  Then, for $\ld$-a.e.\ $\xi$, $h_{\xi}(u)$ is 
completely monotone on $\R$.
\end{prop}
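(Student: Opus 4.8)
The plan is to obtain complete monotonicity directly from Proposition \ref{p9} by letting the order $m$ of differentiability tend to infinity; the only genuine point requiring care is the bookkeeping of the $\ld$-null exceptional sets, which a priori depend on $m$.

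First I would record that $U_{\infty}(\rd)=\bigcap_{m=0}^{\infty}U_m(\rd)$, so the hypothesis $\mu\in U_{\infty}(\rd)$ gives $\mu\in U_m(\rd)$ for every $m$. Fixing once and for all the polar decomposition $(\ld(d\xi),\nu_{\xi}(dr))$ of $\nu$ and the associated $h_{\xi}(u)$ defined by \eqref{4.4a}, I would apply Proposition \ref{p9} separately for each $m=0,1,2,\ldots$. For each such $m$ the proposition furnishes a $\ld$-null set $N_m\subset S$ such that, for $\xi\notin N_m$, the function $h_{\xi}$ is $m$ times differentiable, $(d/du)^m h_{\xi}$ is absolutely continuous, and $(-d/du)^j h_{\xi}(u)\ges 0$ for all $u\in\R$ and all $j=0,1,\ldots,m$.

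Next I would pass to the single $\ld$-null set $N=\bigcup_{m=0}^{\infty}N_m$, a countable union of $\ld$-null sets being $\ld$-null. For any fixed $\xi\notin N$ and any $j\ges 0$, choosing any $m\ges j$ shows that $h_{\xi}$ is $j$ times differentiable at every point and that $(-1)^j (d/du)^j h_{\xi}(u)\ges 0$ for all $u\in\R$. Since $j$ is arbitrary, $h_{\xi}\in C^{\infty}(\R)$ and $(-1)^j(d/du)^j h_{\xi}(u)\ges 0$ for every $j$ and every $u$, which is precisely the assertion that $h_{\xi}$ is completely monotone on $\R$. As this holds for all $\xi\notin N$, the conclusion follows for $\ld$-a.e.\ $\xi$.

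The argument is short, and I do not anticipate a serious obstacle: all the substance has been placed in Proposition \ref{p9}, whose inductive proof already encodes the passage from order $m$ to order $m+1$. The one thing to remain vigilant about is consistency — invoking Proposition \ref{p9} with the \emph{same} polar decomposition (hence the same $h_{\xi}$) for the different values of $m$, so that the smoothness and sign conclusions genuinely accumulate on a common $\xi$ rather than on decompositions differing by the scaling $c(\xi)$ permitted in Proposition \ref{p2.1}.
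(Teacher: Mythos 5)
Your proposal is correct and follows exactly the route the paper intends: the paper's own proof of Proposition \ref{p10} consists of the single sentence ``This is clear from Proposition \ref{p9},'' and your argument (applying Proposition \ref{p9} for each $m$ with the same fixed polar decomposition, uniting the countably many $\ld$-null exceptional sets, and reading off $(-1)^j(d/du)^j h_{\xi}\ges0$ everywhere by taking $m\ges j$) is precisely the bookkeeping that makes that sentence rigorous.
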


\n
{\it Proof}. This is clear from Proposition \ref{p9}. \qed

\medskip
Now we use Bernstein's theorem and the representation theorem for $L_{\infty}
(\R^d)$.

\begin{prop}\label{p11}
$U_{\infty}(\R^d)\subset L_{\infty}(\R^d)$.
\end{prop}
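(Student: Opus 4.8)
The plan is to convert the complete monotonicity of $h_\xi$ supplied by Proposition \ref{p10} into an explicit integral representation via Bernstein's theorem, and then to verify that this representation is exactly the one characterizing $L_\infty(\R^d)$ in [10]. First I would dispose of the degenerate case: if the L\'evy measure of $\mu$ is $0$, then $\mu$ is Gaussian (with some drift), hence $2$-stable, so $\mu\in S(\R^d)\subset\overline{S(\R^d)}=L_\infty(\R^d)$ by Proposition \ref{p2.10}, and there is nothing to prove. So assume $\nu\neq0$, fix a polar decomposition $(\ld(d\xi),\nu_\xi(dr))$, and define $h_\xi$ by \eqref{4.4a}. By Proposition \ref{p10}, $h_\xi$ is completely monotone on $\R$ for $\ld$-a.e.\ $\xi$.

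Next, for each such $\xi$ I would apply Bernstein's theorem. Complete monotonicity on all of $\R$ (not merely on $(0,\infty)$) yields a measure $Q_\xi$ on $[0,\infty)$ with $h_\xi(u)=\int_{[0,\infty)}e^{-\beta u}Q_\xi(d\beta)$ for every $u\in\R$, and finiteness of the integral as $u\to-\infty$ forces $Q_\xi$ to possess all exponential moments. The map $\xi\mapsto Q_\xi$ can be taken measurable by the uniqueness in Bernstein's theorem together with the measurability of $\xi\mapsto\nu_\xi$ built into the polar decomposition. Since $h_\xi(u)=e^{-u}\nu_\xi((e^u,\infty))$, the substitution $r=e^u$ gives the tail
\[
\nu_\xi((r,\infty))=\int_{[0,\infty)}r^{1-\beta}Q_\xi(d\beta),\qquad r>0.
\]

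Finally I would pin down the support of $Q_\xi$ and match it to [10]. Because the tail $\nu_\xi((r,\infty))$ is finite and tends to $0$ as $r\to\infty$, while $r^{1-\beta}\to\infty$ for $\beta<1$ and equals $1$ for $\beta=1$, the measure $Q_\xi$ must vanish on $[0,1]$. Differentiating the tail then gives $\nu_\xi(dr)=\left(\int_{(1,\infty)}(\beta-1)r^{-\beta}Q_\xi(d\beta)\right)dr$, so the L\'evy integrability requirement $\int_0^1 r^2\,\nu_\xi(dr)<\infty$ becomes $\int_{(1,\infty)}(\beta-1)(3-\beta)^{-1}Q_\xi(d\beta)<\infty$, which forces $Q_\xi([3,\infty))=0$. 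Hence $Q_\xi$ is carried by $(1,3)$. Writing $\al=\beta-1\in(0,2)$ and letting $\Gamma_\xi$ be the image of $Q_\xi$ under this change of variable, we obtain $\nu_\xi((r,\infty))=\int_{(0,2)}r^{-\al}\Gamma_\xi(d\al)$, which is precisely the mixed-stable representation of the L\'evy measure of a member of $L_\infty(\R^d)$ given in [10]; since the Gaussian part and the drift of $\mu$ are unconstrained for membership in $\overline{S(\R^d)}$, this shows $\mu\in L_\infty(\R^d)$.

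I expect the main obstacle to be this last step: converting the two analytic constraints on $\nu_\xi$ (finiteness/vanishing of the tail and the $(|r|^2\wedge1)$-integrability) into the sharp support bounds $1<\beta<3$ on $Q_\xi$, and checking that the resulting representation coincides \emph{verbatim} with the one in [10], including the measurability of $\xi\mapsto\Gamma_\xi$ and the correct handling of the boundary indices $\al\in\{0,2\}$. By contrast, the invocation of Bernstein's theorem and the bookkeeping of the polar decomposition should be routine.
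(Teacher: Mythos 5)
Your proposal is correct and follows essentially the same route as the paper: Bernstein's theorem applied to the completely monotone $h_\xi$ on all of $\R$, the support analysis forcing the representing measure onto $(1,3)$, and the change of variables $\al=\beta-1$ to recover the mixed-stable form $\int_{(0,2)}r^{-\al-1}\,dr$ of the L\'evy measure characterizing $L_\infty(\R^d)$ in [10]. The only details the paper spells out that you defer are the shifted-Bernstein argument (via $h_\xi(a+u)$ and uniqueness) establishing the Laplace representation on all of $\R$, and the disintegration $\ld(d\xi)\Gm_\xi(d\al)=\Gm(d\al)\ld_\al(d\xi)$ needed to match the representation of [10] verbatim --- both of which you correctly flag as the remaining bookkeeping.
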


\n
{\it Proof}. 
Let $\mu\in U_{\infty}(\R^d)$.
If $\mu$ is Gaussian, then obviously $\mu\in L_{\infty}(\R^d)$.
Suppose that $\mu$ has L\'evy measure $\nu\neq0$. Choose the
polar decomposition $(\ld(d\xi), \nu_{\xi}(dr))$ of $\nu$ such that
\begin{equation*}
\int_0^{\infty}(r^2\land1)\nu_{\xi}(dr)=c=\int_{\R^d}(|x|^2\land1)\nu(dx),
\qquad \xi\in S
\end{equation*}
and $\ld(d\xi)$ is a probability measure.
Let $h_{\xi}(u)=e^{-u}\nu_{\xi}((e^u,\infty))$.
Then it follows from Proposition \ref{p10} that,  for $\ld$-a.e.\ $\xi$, 
$h_{\xi}(u)$ is completely monotone on $\R$.
For $a\in\R$, $h_{\xi}(a+u)$ is a completely monotone function of $u>0$.
Hence by Bernstein's theorem 
there is a unique measure $H_{\xi}^a$ on $[0,\infty)$ such that
\[
h_{\xi}(a+u)=\int_{[0,\infty)}e^{-uv} H_{\xi}^a(dv),\qquad u>0.
\]
If $a_1<a_2$, then
\[
h_{\xi}(a_2+u)=h_{\xi}(a_1+(a_2-a_1)+u)=\int_{[0,\infty)}e^{-(a_2-a_1+u)v}
 H_{\xi}^{a_1}(dv)
\]
for $u>0$ and, by the uniqueness, we have
\[
e^{-(a_2-a_1)v} H_{\xi}^{a_1}(dv)=H_{\xi}^{a_2}(dv).
\]
Hence $e^{av}H_{\xi}^{a}(dv)$ is independent of $a$. Write this measure
as $H_{\xi}(dv)$. Then
\[
h_{\xi}(a+u)=\int_{[0,\infty)}e^{-uv} e^{-av}H_{\xi}(dv)
\]
for all $a\in \R$ and $u>0$. It follows that
\begin{equation}\label{4.7}
h_{\xi}(u)=\int_{[0,\infty)} e^{-uv}H_{\xi}(dv),\qquad u\in\R.
\end{equation}
As in p.\,218 of [10] or p.\,17 of [8],
we can prove that, for any $B\in\mcal B([0,\infty))$, $H_{\xi}(B)$ is
measurable in $\xi$. The identity \eqref{4.7} can be written as
\begin{equation}\label{4.8}
\nu_{\xi}((r,\infty))=\int_{[0,\infty)} r^{1-v}H_{\xi}(dv),\qquad r>0.
\end{equation}
Since the left-hand side tends to $0$ as $r\to\infty$, we obtain 
$H_{\xi}(\,[0,1]\,)=0$. Now we get
\[
-\frac{d}{dr}(\nu_{\xi}((r,\infty)))=\int_{(1,\infty)} (v-1)r^{-v}H_{\xi}(dv),
\qquad \text{a.e. }r>0.
\]
Since
\[
\infty>\int_0^1 r^2\nu_{\xi}(dr)=\int_0^1 r^2dr\int_{(1,\infty)}(v-1)r^{-v}H_{\xi}(dv)
=\int_{(1,\infty)}(v-1)H_{\xi}(dv)\int_0^1 r^{2-v}dr,
\]
we obtain $H_{\xi}(\,[3,\infty)\,)=0$. Define
\[
\Gm_{\xi}(E)=\int_{(1,3)} 1_E (v-1)\,(v-1)H_{\xi}(dv),\qquad E\in\mcal B([0,\infty)).
\]
Then $\Gm_{\xi}$ is concentrated on $(0,2)$ and
\[
\int_{(0,2)} f(\al)\Gm_{\xi}(d\al)=\int_{(1,3)}f(v-1)\,(v-1)H_{\xi}(dv),\qquad 
\text{for all measurable $f\ges0$}.
\]
Now,
{\allowdisplaybreaks
\begin{gather*}
\frac{d}{dr}(\nu_{\xi}((r,\infty)))=\int_{(1,3)}(v-1)r^{-(v-1)-1}H_{\xi}(dv)
=\int_{(0,2)}r^{-\al-1}\Gm_{\xi}(d\al),\\
\int_{(0,1]} r^2\nu_{\xi}(dr)=\int_{(0,2)}\frac{\Gm_{\xi}(d\al)}{2-\al},\\
\int_{(1,\infty)} \nu_{\xi}(dr)=\int_{(0,2)}\frac{\Gm_{\xi}(d\al)}{\al}.
\end{gather*}
Hence}
\[
\int_{(0,2)}\left(\frac{1}{\al}+\frac{1}{2-\al}\right)\Gm_{\xi}(d\al)=c.
\]
We can find a finite measure $\Gm$ on $(0,2)$ and probability measures $\ld_{\al}$
on $S$ such that $\ld_{\al}(D)$ is measurable in $\al$ for any $D\in\mcal B(S)$ and
$\Gm(d\al)\ld_{\al}(d\xi)=\ld(d\xi)\Gm_{\xi}(d\al)$. Thus,
\[
\int_{(0,2)}\left(\frac{1}{\al}+\frac{1}{2-\al}\right)\Gm(d\al)=c
\]
and,
for any $B\in\mcal B(\R^d\setminus\{0\})$, 
\begin{align*}
\nu(B)&=\int_S \ld(d\xi)\int_0^{\infty} 1_B(r\xi)\nu_{\xi}(dr)
=\int_S \ld(d\xi)\int_0^{\infty} 1_B(r\xi)dr\int_{(0,2)}r^{-\al-1}
\Gm_{\xi}(d\al)\\
&=\int_{(0,2)}\Gm(d\al)\int_S \ld_{\al}(d\xi)\int_0^{\infty} 1_B(r\xi)
r^{-\al-1}dr.
\end{align*}
This is exactly the form of the L\'evy measure 
in Theorem 22 of [8] (originally Theorem 3.4
of [10]).  This shows that $\mu\in L_{\infty}(\R^d)$.
The proof is completed.
\qed

\vskip 3mm
Finally we have the following.

\begin{prop}
$U_{\infty}(\R^d)= L_{\infty}(\R^d)$.
\end{prop}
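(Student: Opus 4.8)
The plan is to obtain the equality from the two inclusions $U_{\infty}(\rd)\subset L_{\infty}(\rd)$ and $L_{\infty}(\rd)\subset U_{\infty}(\rd)$. The first of these is exactly Proposition \ref{p11}, which has just been established through the polar-decomposition representation, so no further work is needed there. The entire remaining task is to prove the reverse inclusion $L_{\infty}(\rd)\subset U_{\infty}(\rd)$, and I would do this without recomputing any L\'evy-measure representations, instead exploiting the invariance properties of the maps already in hand together with the identification of $L_{\infty}(\rd)$ as a closure.

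For the reverse inclusion I would first invoke Proposition \ref{p2.10} to rewrite the target as $L_{\infty}(\rd)=\overline{S(\rd)}$, so that it suffices to show $\overline{S(\rd)}\subset U_{\infty}(\rd)$. The key point is that each stable class is fixed by the $\mcal U$-mapping. Since $\mcal U$ is a mapping $\Ph_f$ with $f(s)=s$ and $s_0=1$, a nonnegative square-integrable function on $(0,1)$ with $\int_0^1 s\,ds=1/2\neq0$ (see Example \ref{e1}), Lemma \ref{l3} gives $\mcal U(S_{\al}(\rd))=S_{\al}(\rd)$ for every $0<\al\les2$. Iterating, $\mcal U^{m+1}(S_{\al}(\rd))=S_{\al}(\rd)$, whence $S_{\al}(\rd)=\mcal U^{m+1}(S_{\al}(\rd))\subset \mcal U^{m+1}(I(\rd))=U_m(\rd)$ for every $m$. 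Taking the union over $\al$ and then the intersection over $m$ yields $S(\rd)\subset\bigcap_{m=0}^{\infty}U_m(\rd)=U_{\infty}(\rd)$.

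It then remains to pass from $S(\rd)$ to its closure, that is, to check that $U_{\infty}(\rd)$ absorbs weak limits and convolutions. Here I would argue that $U_{\infty}(\rd)$ is c.c.s.s.: the class $I(\rd)$ is c.c.s.s., and since $\mcal U=\Ph_f$ has finite upper limit $a=1$ and $\int_0^1 f(s)\,ds\neq0$, Proposition \ref{p3.2} (4) shows inductively that each $U_m(\rd)=\mcal U^{m+1}(I(\rd))$ is c.c.s.s. A countable intersection of c.c.s.s.\ classes is again c.c.s.s., since each of the four defining properties (closure under convolution, weak convergence, affine maps, and powers) passes termwise to the intersection. In particular $U_{\infty}(\rd)$ is closed under weak convergence and convolution, so it contains the closure $\overline{S(\rd)}$, giving the desired reverse inclusion.

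Combining this with Proposition \ref{p11} yields $U_{\infty}(\rd)=L_{\infty}(\rd)$. I expect the reverse inclusion itself to be routine once the two ingredients are assembled; the only point that needs a little care is verifying that the intersection $U_{\infty}(\rd)$ inherits the c.c.s.s.\ property, so that the \emph{closure} of $S(\rd)$, and not merely $S(\rd)$ itself, is captured. The genuinely difficult work of this section was already spent on Proposition \ref{p11}, whose representation-theoretic argument supplies the nontrivial inclusion $U_{\infty}(\rd)\subset L_{\infty}(\rd)$.
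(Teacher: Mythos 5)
Your proof is correct, but it takes a genuinely different route from the paper for the nontrivial direction $L_{\infty}(\R^d)\subset U_{\infty}(\R^d)$. The paper disposes of this inclusion in one line by citing Jurek's termwise result $L_m(\R^d)\subset U_m(\R^d)$ for each $m$ from [6], and then intersecting over $m$. You instead bypass any level-by-level comparison of $L_m$ and $U_m$: you use Proposition \ref{p2.10} to replace $L_{\infty}(\R^d)$ by $\overline{S(\R^d)}$, fix each stable class under $\mcal U$ via Lemma \ref{l3} (with $f(s)=s$ on $(0,1)$, which satisfies its hypotheses; the paper's Example \ref{e1} uses the equivalent $f(s)=1-s$), and then upgrade $S(\R^d)\subset U_{\infty}(\R^d)$ to $\overline{S(\R^d)}\subset U_{\infty}(\R^d)$ by showing each $U_m(\R^d)$ is c.c.s.s.\ via Proposition \ref{p3.2}\,(4) and that the c.c.s.s.\ property passes to countable intersections. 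This is essentially the argument the paper itself runs inside the proof of Theorem \ref{t2}\,(5) (specialized to $\Ph_f=\mcal U$), and that portion of the argument does not invoke Proposition \ref{p2.8}, so there is no circularity. What your route buys is self-containedness within the paper's own machinery, at the price of leaning on the deep external identification $L_{\infty}(\R^d)=\overline{S(\R^d)}$ from [17] and [10]; the paper's route is shorter but imports the termwise inclusion from [6]. Your observation that only closure under weak convergence and convolution (rather than the full c.c.s.s.\ package) is actually needed to absorb $\overline{S(\R^d)}$ is accurate, and establishing the full c.c.s.s.\ property is harmless.
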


\n
{\it Proof}. It remains to prove that $U_{\infty}(\R^d)\supset L_{\infty}(\R^d)$, 
but this is concluded from that $U_{m}(\R^d)\supset L_{m}(\R^d)$ for each $m\ge 1$,
which is shown in [6].
\qed

\vskip 10mm
\n
{\bf Acknowledgement}\\
The authors wish to thank Z.J. Jurek for a discussion on his paper [6].

\vskip 10mm

\end{document}